 \title[On nonnegative invariant quartics in type $A$]
{On nonnegative invariant quartics in type $A$
}
\author{Sebastian Debus}
\address{Faculty of Mathematics, Technische Universität Chemnitz, Germany}
\email{sebastian.debus@mathematik.tu-chemnitz.de}
\author{Charu Goel}
\address{Department of Basic Sciences, Indian Institute of Information Technology Nagpur, India}
\email{charugoel@iiitn.ac.in}
\author{Salma Kuhlmann}
\address{Department of Mathematics and Statistics, University of Konstanz, Germany}
\email{salma.kuhlmann@uni-konstanz.de}
\author{Cordian Riener}
\address{Department of Mathematics and Statistics, UIT The Arctic University of Norway, Norway}
\email{cordian.riener@uit.no}
\date{}
\numberwithin{equation}{section}
\begin{document}
\maketitle

\theoremstyle{definition}
\numberwithin{equation}{section}
\newtheorem{thm}{Theorem}[section]
\newtheorem{prop}[thm]{Proposition}
\newtheorem{lemma}[thm]{Lemma}
\newtheorem{cor}[thm]{Corollary}
\newtheorem{con}{Question}
\newtheorem{notation}{Notation}
\newtheorem{conj}{Conjecture}[section]
\newtheorem{definition}[thm]{Definition}
\newtheorem{remark}[thm]{Remark}
\theoremstyle{definition}
\newtheorem{ex}[thm]{Example}
\newcommand{\commentit}[1]{\noindent {\color{RED} $\RHD$ \textsc{#1} $\LHD$}}
\newenvironment{cmtlist}
 {\begin{itemize}\color{RED}\sc }
 {\end{itemize}}
\definecolor{RED}{rgb}{0.6,0,0}
\setlength{\parskip}{0.5em}
 \newcommand{\N}{\mathbb{N}}
\newcommand{\PP}{\mathcal{P}}
\newcommand{\R}{\mathbb{R}}
\newcommand{\C}{\mathbb{C}}
\newcommand{\Q}{\mathbb{Q}}
\newcommand{\Sn}{\mathcal{S}_n}
\newcommand\g{\mathfrak{g}}
\newcommand{\al}{\alpha}
\newcommand{\be}{\beta}
\newcommand{\ga}{\gamma}
\newcommand{\la}{\lambda}


\vspace{-1cm}

\begin{abstract}
The equivariant nonnegativity versus sums of squares question has been solved for any infinite series of essential reflection groups but type $A$. As a first step to a classification, we analyse $A_n$-invariant quartics. We prove that the cones of invariant sums of squares and nonnegative forms are equal if and only if the number of variables is at most $3$ or odd.
\end{abstract}

 \section{Introduction}
The study of \emph{nonnegative} real polynomials, i.e. polynomials whose evaluation on any point is nonnegative, is a topic of interest from many perspectives, e.g. verificiation of polynomial inequalities and polynomial optimization. From complexity theoretical view the verification is NP-hard \cite{blum1998complexity}. If one can write a real polynomial as a \emph{sum of squares} of real polynomials, then the polynomial is clearly nonnegative. It was shown by Hilbert \cite{Hilb_1} in his celebrated theorem from 1888 that there are basically three cases where any nonnegative polynomial is a sum of squares. We formulate Hilbert's theorem in terms of \emph{forms}, i.e. homogeneous polynomials, since any polynomial is nonnegative if and only if its homogenization is nonnegative and a sum of squares if and only if its homogenization is a sum of squares \cite{Marshall}.
Hilbert showed that the cones of nonnegative forms and that of sums of squares of degree $2d$ in $n$ variables are equal if and only if $(n,2d) \in \{(2,2d'),(n',2),(3,4) \ | \ n',d' \in \N \}$. Hilbert's proof was unconstructive and it took almost 80 years until the first example of a nonnegative polynomial which is not a sum of squares was given (this is the Motzkin polynomial \cite{Motz-1}). It was then asked by Hilbert whether any nonnegative polynomial is a sum of squares of rational functions. This is known as Hilbert's 17th problem. E. Artin 
proved that this is true, thereby lying the cornerstone of the field of \emph{real algebraic geometry}.

Motivated by Hilbert's 1888 theorem, several authors investigated the equivariant setting. For a group $G$ acting on the real polynomial ring one restricts to \emph{invariant} forms, i.e. forms which are fixed under the action of $G$. Choi, Lam and Reznick investigated the question for the symmetric group $S_n$ which was completed by Goel, Kuhlmann and Reznick \cite{G-K-R}. The signed symmetric group $B_n$ acting on the polynomial ring via permutation of variables and switching of signs was also considered \cite{G-K-R-2}. Recently, Debus and Riener considered $D_n$-invariant forms where $D_n$ is the subgroup of $B_n$ of even number of sign changes. All these groups have in common that they are \emph{reflection groups}. 

A finite group $G$ is a \emph{real reflection group} if $G \subset \operatorname{GL}_n(\R^n)$ is such that the matrix group is generated by \emph{reflections}, i.e. isometries $\R^n \to \R^n$ with a hyperplane as the set of fixed points. We usually just say that an abstract group $G$ is a real reflection group and the representation of $G$ is implicitly known. A real reflection group is called \emph{essential} if no non-trivial subspace of $\R^n$ is point wise fixed. It is a classical result that any real reflection group can be decomposed into a direct product of essential reflection groups. The essential real reflection groups were fully classified by Coxeter \cite{MR1503182,MR1581693}. There are four infinite families $A_n,B_n,D_n$ and $I_2(m)$ and six exceptional real reflection groups $H_3$, $H_4$, $F_4$, $E_6$, $E_7$, and $E_8$.

For $B_n,D_n$ and trivially $I_2(m)$ the equivariant classification of nonnegativity versus sums of squares was completed in \cite{Debus-Riener}. It is a natural question to consider the remaining infinite series of essential reflection groups $A_n$ and to study the equivariant nonnegativity versus sums of squares question. In this paper we initiate a study of $A_n$-invariant quartics. Although the vector space dimension of $A_n$-invariant quartics is only $2$, we will see that the understanding is challenging. A reason for the complexity involved here is that we do not consider nonnegativity of a polynomial globally. We consider nonnegativity on a hyperplane and do consider sums of squares modulo an ideal which is in general a very difficult problem.

The paper is structured as follows. Section \ref{sec:The reflection group of type $A$} explains the action of the group $A_n$ on an $n$-dimensional vector space and the induced action on the polynomial ring. Following this, we examine the sets of nonnegative and sums of squares $A_n$ invariant quartics in Section \ref{sec:SOSversusPSD}. We begin in Subsection \ref{subsection Nonnegativity vs U nonengativity} to elaborate on the difference between global nonnegativity of quartics and nonnegativity of $A_n$-invariant quartics. In Subsection \ref{subsection:PSD invariant} we provide the extremal elements of the cone of $A_n$-invariant nonnegative quartics before we analyse the $A_n$-invariant sums of squares quartics in Subsection \ref{subsection:SOS invariant}. Finally, we present a proof of our main theorem, Theorem \ref{thm:main} in Subsection \ref{subsection Proof of the theorem}. 

\section{The reflection groups of type $A$ and $A_n$-invariant polynomials} \label{sec:The reflection group of type $A$}

The real reflection group $A_n$ is, as a group, isomorphic to the symmetric group $S_{n+1}$. Recall that the reflection group $S_{n+1}$ is acting on $\R^{n+1}$ via permutation of coordinates in all possible ways. We call this action the \emph{permutation action} of the symmetric group. There is a non-trivial fixed subspace which is spanned by the vector $(1,\ldots,1)$ under the permutation action and thus  the permutation action does  not define an  \emph{essential} real reflection group. The action of $S_{n+1}$ on the invariant subspace $U_n:=\{ a \in \R^{n+1} : \sum_{i=1}^n a_i = 0\}$ via permutation of coordinates defines an essential real reflection group called $A_n$. We also say that it is the reflection group of \emph{type $A$}.

Recall that any group $G$ acting on $\R^n$ induces an action of $G$ on the polynomial ring $\R[\mathbf{x}]$ in $n$ variables. The action is as follows:
\[ \sigma \cdot f(\mathbf{x}) := f( \sigma^{-1} \cdot \mathbf{x})\]
where $\mathbf{x}=(\mathbf{x}_1,\ldots,\mathbf{x}_n)$ is a basis of the dual vector space of $\R^n$ and $\sigma \in G$. We refer to (\cite{Blek-Riener}, Section $4$) for details.

It is a classical result by Chevalley, Sheppard and Todd that the \emph{invariant ring} of real polynomials under the action of a finite matrix group in $\operatorname{GL}_n(\R)$ is isomorphic to a polynomial ring if and only if the group is a real reflection group \cite{MR0072877,MR0059914}.

In order to study $A_n$ invariant forms we consider the restriction of the permutation action of the symmetric group $S_{n+1}$ to the $n$ dimensional real vector space \[U_n =\left\{ a \in \R^{n+1} : \sum_i a_i = 0\right\}  .\]
Let $e_i \in \R^n$ denote the unit vector with $1$ at the $i$-th coordinate. A linear basis of $U_n$ is \[u_1=e_1-e_2,\ldots,u_n=e_1-e_{n+1}.\] The group $A_n$ acts on $U_n$ via permutation of the $e_i$'s in all possible ways. We obtain an induced action on an $n$-variate polynomial ring $\R[\mathbf{y}]$, where $\mathbf{y}$ is a basis of the dual vector space of $U_n$ and on the quotient of an $(n+1)$-variate polynomial ring $\R[\mathbf{x}]$ modulo the ideal generated by the linear polynomial $\mathbf{x}_1+\ldots+\mathbf{x}_{n+1}$. While $A_n$ does act on the $(n+1)$-variate quotient ring $\R[\mathbf{x}]/(\mathbf{x}_1+\ldots+\mathbf{x}_{n+1})$ via permutation of the $\mathbf{x}_i$'s, the reflection group does not permute the $\mathbf{y}_i$'s. We recall that those rings are isomorphic and two equivalent representations of $A_n$.

For two real representations $V,W$ of a group $G$ we say a linear map $\phi: V \to W$ is \emph{$G$-equivariant} if $\sigma \cdot \phi (v) = \phi (\sigma \cdot w)$ for any $v \in V, w \in W, \sigma \in G$.

\begin{prop}
The ring homomorphism $\R[\mathbf{y}] \to \R[\mathbf{x}]/(\mathbf{x}_1+\ldots+\mathbf{x}_{n+1})$ defined by $ \mathbf{y}_i \mapsto \mathbf{x}_1 - \mathbf{x}_{i+1}$, for all $1 \leq i \leq n$,
is a $A_n$-equivariant isomorphism.    
\end{prop}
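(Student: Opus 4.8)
The plan is to verify three things in turn: that the assignment on generators defines an $\R$-algebra homomorphism, that this homomorphism is bijective, and that it intertwines the two $A_n$-actions. Since $\R[\mathbf{y}]$ is the free commutative $\R$-algebra on $\mathbf{y}_1,\dots,\mathbf{y}_n$, the rule $\mathbf{y}_i\mapsto \mathbf{x}_1-\mathbf{x}_{i+1}$ extends uniquely to an $\R$-algebra homomorphism $\varphi\colon \R[\mathbf{y}]\to\R[\mathbf{x}]/(\mathbf{x}_1+\dots+\mathbf{x}_{n+1})$, so well-definedness is automatic.

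For bijectivity I would exhibit an explicit two-sided inverse. Define $\psi$ on generators by $\mathbf{x}_1\mapsto \tfrac{1}{n+1}\sum_{i=1}^{n}\mathbf{y}_i$ and $\mathbf{x}_{j}\mapsto \tfrac{1}{n+1}\sum_{i=1}^{n}\mathbf{y}_i-\mathbf{y}_{j-1}$ for $2\le j\le n+1$; a one-line computation shows that the image of $\mathbf{x}_1+\dots+\mathbf{x}_{n+1}$ is $0$, so $\psi$ descends to a homomorphism $\R[\mathbf{x}]/(\mathbf{x}_1+\dots+\mathbf{x}_{n+1})\to\R[\mathbf{y}]$. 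Checking $\psi\circ\varphi$ and $\varphi\circ\psi$ on the respective generators is then short arithmetic, using the identity $\sum_{i=1}^{n}(\mathbf{x}_1-\mathbf{x}_{i+1})=(n+1)\mathbf{x}_1$ in the quotient ring; this gives that both composites are the identity. Alternatively, the same identity shows directly that $\varphi$ is surjective, and since both rings are polynomial rings in $n$ variables over $\R$, any surjection between them has trivial kernel by comparison of Krull dimensions, whence $\varphi$ is also injective.

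It remains to check $A_n$-equivariance. Because $\varphi$ and the action of each $\sigma\in A_n$ on either side are $\R$-algebra homomorphisms, it suffices to verify $\sigma\cdot\varphi(\mathbf{y}_i)=\varphi(\sigma\cdot\mathbf{y}_i)$ for every generator $\mathbf{y}_i$, i.e.\ on the degree-one part. There $\varphi$ is precisely the canonical linear map $U_n^{\ast}\to (\R^{n+1})^{\ast}/(\mathbf{x}_1+\dots+\mathbf{x}_{n+1})$ induced by restricting linear functionals along the inclusion $U_n\hookrightarrow\R^{n+1}$, under which the functional represented by $\mathbf{y}_i$ corresponds to the class of $\mathbf{x}_1-\mathbf{x}_{i+1}$. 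Since the inclusion $U_n\hookrightarrow\R^{n+1}$ is equivariant for the permutation action of $S_{n+1}$, and $A_n$ acts on $U_n$ as the restriction of this action, the induced map on linear forms is $S_{n+1}$-equivariant, hence $A_n$-equivariant; equivariance on degree one then propagates to all of $\R[\mathbf{y}]$ by multiplicativity. The only genuinely delicate point is the bookkeeping for the $A_n$-action on $\R[\mathbf{y}]$, which does \emph{not} permute the $\mathbf{y}_i$, and matching it through this identification of linear parts with the permutation action on $\R[\mathbf{x}]/(\mathbf{x}_1+\dots+\mathbf{x}_{n+1})$; by contrast, bijectivity is a routine verification once the inverse $\psi$ is written down.
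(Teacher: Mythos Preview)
Your proof is correct and follows essentially the same line as the paper's: both rest on the fact that $\{\mathbf{x}_1-\mathbf{x}_{i+1}\}_{1\le i\le n}$ together with $p_1=\mathbf{x}_1+\dots+\mathbf{x}_{n+1}$ form a basis of the degree-one part of $\R[\mathbf{x}]$, so that quotienting by $(p_1)$ leaves a polynomial ring in the classes $\mathbf{x}_1-\mathbf{x}_{i+1}$, with equivariance inherited from the $S_{n+1}$-equivariant inclusion $U_n\hookrightarrow\R^{n+1}$. The only difference is presentational: the paper argues via this linear change of coordinates in one stroke, whereas you write down the explicit inverse $\psi$ (and offer a Krull-dimension alternative) and spell out the dual-space interpretation for equivariance more carefully.
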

\begin{proof}
Recall that $A_n$ fixes the subspace defined by $\mathbf{x}_1+\ldots+\mathbf{x}_{n+1} = 0$. A basis of this subspace is $\mathbf{x}_1-\mathbf{x}_{i+1}$ for $1 \leq i \leq n$. The basis elements and $\mathbf{x}_1+\ldots+\mathbf{x}_{n+1}$ form a basis of the degree $1$ part of $\R[\mathbf{x}]$. We have 
\[ \R[\mathbf{x}] \quad \cong \quad \R[\mathbf{x}_1-\mathbf{x}_2,\ldots,\mathbf{x}_1-\mathbf{x}_{n+1}][\mathbf{x}_1+\ldots+\mathbf{x}_{n+1} ] \]
and
\[ \R[\mathbf{x}]/(\mathbf{x}_1+\ldots+\mathbf{x}_{n+1}) \quad \cong \quad \R[\mathbf{y}] \]
With the discussion above the induced linear isomorphism is $A_n$-equivariant.
\end{proof}

Since we have a ring isomorphism we have that being a sum of squares is equivalent for the image and preimage. Moreover, nonnegativity of the preimage is equivalent to nonnegativity of the image on the subspace $U_n$ of $\R^{n+1}$. 

We denote by $p_k$ the \emph{power sum} polynomial of degree $k$ in the $(n+1)$-variables $\mathbf{x}$, i.e. $p_k = \sum_{i=1}^{n+1} \mathbf{x}_i^k$. It is classically known that the power sum polynomials $p_2,\ldots,p_{n+1}$ generate the $A_n$-invariant ring as $\R$-algebra modulo the ideal $(p_1)$. 

\begin{thm}
The invariant ring of $A_n$ is isomorphic to a polynomial ring. The invariant ring of $A_n$ acting via permutation of the variables $\mathbf{x}$ on the 
$(n+1)$-variate quotient ring $\R[\mathbf{x}]/(p_1)$ is 
$ \R[\mathbf{x}]^{A_n} \simeq \R[p_2,\ldots,p_{n+1}]\, .$ 
\end{thm}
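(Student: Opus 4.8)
The first assertion is an instance of the Chevalley--Shephard--Todd theorem recalled above, since $A_n$ is a real reflection group; it will also drop out of the explicit description provided by the second assertion together with the previous proposition. For the second assertion the plan is to reduce the computation of the $A_n$-invariants of the quotient ring to the classical description of the symmetric invariants of $\R[\mathbf{x}]$, and then to push that description through the quotient by $(p_1)$.

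First I would record that, by the discussion preceding the statement, $A_n\simeq S_{n+1}$ acts on $\R[\mathbf{x}]/(p_1)$ precisely by permuting the variables $\mathbf{x}_1,\dots,\mathbf{x}_{n+1}$ (note $p_1=\mathbf{x}_1+\dots+\mathbf{x}_{n+1}$), so that $\bigl(\R[\mathbf{x}]/(p_1)\bigr)^{A_n}=\bigl(\R[\mathbf{x}]/(p_1)\bigr)^{S_{n+1}}$. Since $S_{n+1}$ is finite and we work in characteristic zero, the Reynolds operator $R\colon\R[\mathbf{x}]\to\R[\mathbf{x}]^{S_{n+1}}$ is available: it is a projection onto the invariants and a homomorphism of $\R[\mathbf{x}]^{S_{n+1}}$-modules, and the ideal $(p_1)$ is $S_{n+1}$-stable because $p_1$ is invariant. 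Applying $R$ to an arbitrary lift shows that the canonical map $\R[\mathbf{x}]^{S_{n+1}}\to\bigl(\R[\mathbf{x}]/(p_1)\bigr)^{S_{n+1}}$ is surjective, so it remains to identify its kernel $(p_1)\cap\R[\mathbf{x}]^{S_{n+1}}$. The claim is that this equals $p_1\,\R[\mathbf{x}]^{S_{n+1}}$: if $p_1 g$ is symmetric, then $p_1 g=R(p_1 g)=p_1 R(g)$ by $\R[\mathbf{x}]^{S_{n+1}}$-linearity of $R$, and cancelling $p_1$ in the domain $\R[\mathbf{x}]$ gives $g=R(g)\in\R[\mathbf{x}]^{S_{n+1}}$. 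Hence $\bigl(\R[\mathbf{x}]/(p_1)\bigr)^{A_n}\cong\R[\mathbf{x}]^{S_{n+1}}/p_1\,\R[\mathbf{x}]^{S_{n+1}}$.

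Finally I would invoke the classical fact (Newton's identities, valid in characteristic zero) that $\R[\mathbf{x}]^{S_{n+1}}=\R[p_1,\dots,p_{n+1}]$ with $p_1,\dots,p_{n+1}$ algebraically independent. Quotienting by the principal ideal generated by the free generator $p_1$ then yields $\R[\mathbf{x}]^{S_{n+1}}/p_1\,\R[\mathbf{x}]^{S_{n+1}}\cong\R[p_2,\dots,p_{n+1}]$, a polynomial ring in $n$ variables; combined with the equivariant isomorphism of the previous proposition this also re-proves the first assertion. I expect the only point genuinely requiring care to be the compatibility of taking invariants with the quotient by $(p_1)$, that is, the identity $(p_1)\cap\R[\mathbf{x}]^{S_{n+1}}=p_1\,\R[\mathbf{x}]^{S_{n+1}}$ above; the remaining ingredients are standard invariant theory.
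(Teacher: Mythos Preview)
Your argument is correct. The paper, however, does not actually supply a proof of this theorem: it is stated as a classical fact, with the sentence just before it (``It is classically known that the power sum polynomials $p_2,\ldots,p_{n+1}$ generate the $A_n$-invariant ring as $\R$-algebra modulo the ideal $(p_1)$'') together with the earlier citation of Chevalley--Shephard--Todd serving as the entire justification.

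What you have written therefore goes well beyond the paper: you make explicit precisely the step the paper glosses over, namely that taking $S_{n+1}$-invariants commutes with the quotient by $(p_1)$, via the Reynolds-operator identity $(p_1)\cap\R[\mathbf{x}]^{S_{n+1}}=p_1\,\R[\mathbf{x}]^{S_{n+1}}$. That step is the genuine content, and your treatment of it (using $\R[\mathbf{x}]^{S_{n+1}}$-linearity of the Reynolds operator and cancellation of the non-zero-divisor $p_1$) is clean and correct. The remaining ingredients---$\R[\mathbf{x}]^{S_{n+1}}=\R[p_1,\dots,p_{n+1}]$ with algebraically independent generators, and the quotient by a free generator---are standard, exactly as you say.
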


\section{SOS versus PSD for $A_n$-invariant Quartics} \label{sec:SOSversusPSD}
In this Section we prove our main result Theorem \ref{thm:main}. We mainly restrict our notation and definitions to quartics.
Since the invariant ring is generated by the power sums $p_2,\ldots,p_{n+1}$ the vector space of \emph{$A_n$-invariant quartics} is $2$ dimensional and is spanned by the quotient classes of $p_2^2$ and $p_4$.

\begin{definition} \label{def: def of a psd}
We call a $A_n$-invariant quartic in $\R[\mathbf{x}]/(p_1)$ \emph{nonnegative} or \emph{psd} if and only if any element in its quotient class in $\R[\mathbf{x}]$ is nonnegative on $U_n$. We denote the set of psd $A_n$-invariant quartics by $P^{A_n}$. We call a $A_n$-invariant quartic in  $\R[\mathbf{x}]/(p_1)$ a \emph{sum of squares} or \emph{sos} if and only if an element in its quotient class in $\R[\mathbf{x}]$ is of the form $g_1^2+\ldots+g_m^2+p_1\cdot g$ for some $g_1,\ldots,g_m,g \in \R[\mathbf{x}]$. We denote the set of all $A_n$-invariant sos quartics by $\Sigma^{A_n}$.
\end{definition}

Suppose $f_1=ap_2^2+bp_4 + p_1 \cdot g_1 $ and $f_2=ap_2^2+bp_4+p_1\cdot g_2$ are two equivalent $A_n$-invariant quartics. Then nonnegativity of the quotient class $f_1 \mod (p_1)$ is well defined since $p_1=0$ on $U_n$.

The sets $P^{A_n}, \Sigma^{A_n}$ are pointed closed convex cones in the vector space $\R[\mathbf{x}]/(p_1)$.

The main result is the following.
  \begin{thm} \label{thm:main}
For $n \geq 3$ we have $P^{A_n}= \Sigma^{A_n}$ if and only if $n$ is odd. 
\end{thm}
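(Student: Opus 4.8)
The plan is to parametrize the two-dimensional space of $A_n$-invariant quartics by $f_{a,b} = a\, p_2^2 + b\, p_4 \pmod{p_1}$ and to describe both cones $P^{A_n}$ and $\Sigma^{A_n}$ explicitly as planar cones, i.e.\ as intervals of slopes $b/a$ (after suitable normalization). Since both are pointed closed convex cones in a $2$-dimensional vector space, each is determined by its two extreme rays, and the equality $P^{A_n} = \Sigma^{A_n}$ reduces to comparing two pairs of boundary slopes. First I would pin down $P^{A_n}$: nonnegativity of $a\,p_2^2 + b\,p_4$ on the hyperplane $U_n = \{\sum x_i = 0\}$ is, after substituting $s = p_2 = \sum x_i^2$ and noting that on $U_n$ we may rescale so $p_2 = 1$, equivalent to $a + b\, p_4 \geq 0$ for all admissible values of $p_4$ subject to $p_1 = 0$, $p_2 = 1$. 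Thus the key computation is to determine the exact range $[m_n, M_n]$ of $p_4 = \sum x_i^4$ over the sphere section $\{p_1 = 0,\ p_2 = 1\} \subset \R^{n+1}$; the endpoints are attained at points with at most two distinct coordinate values (by a Lagrange-multiplier / extremal-point argument, as in the Choi–Lam–Reznick tradition), which gives closed-form expressions for $m_n, M_n$, and hence the two boundary rays of $P^{A_n}$.

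Next I would compute $\Sigma^{A_n}$. One boundary ray is easy: a perfect square of an invariant quadratic, namely $p_2^2$ itself (the class of a square), lies on the boundary, so one extreme ray of $\Sigma^{A_n}$ coincides with one extreme ray of $P^{A_n}$ — this is the ray where the quartic is $p_2^2$ up to scaling and corresponds to one of the two endpoints above (the one realized at a two-value configuration that is actually a square of a linear form squared). The real work is the \emph{other} boundary ray of $\Sigma^{A_n}$: I need the extreme value of $b/a$ for which $a\,p_2^2 + b\,p_4 \equiv g_1^2 + \cdots + g_m^2 \pmod{p_1}$ with each $g_i$ a quadratic form. Here I would set up the Gram-matrix / SOS-semidefinite description: writing the candidate quartic in terms of a basis of quadratics modulo $(p_1)$, being a sum of squares is equivalent to the existence of a positive semidefinite Gram matrix, and by averaging over $A_n$ one may assume the Gram matrix is $A_n$-invariant, hence block-diagonalizes according to the isotypic decomposition of the $S_{n+1}$-representation on degree-$2$ forms modulo $(p_1)$. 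This symmetry reduction is the crux: the relevant representation decomposes into the trivial, standard, and a third irreducible (corresponding to the partition $(n-1,2)$), and the SOS condition becomes positive semidefiniteness of small matrices whose entries are affine in $a, b$ — yielding the extremal SOS slope in closed form.

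Finally I would compare. Both cones share the "square" boundary ray; the theorem then amounts to showing the \emph{other} boundary slopes agree precisely when $n$ is odd and differ when $n$ is even. The expected picture is that the PSD-extremal configuration on the even side is a balanced $\pm 1$ vector (half the coordinates equal, half equal to the negative), which is only available when $n+1$ is even, i.e.\ $n$ odd — exactly when this extremal psd form turns out to be a sum of squares — whereas for $n$ even the psd-extremal point sits at an unbalanced two-value configuration that is \emph{not} SOS, certified by exhibiting an explicit dual functional (a point evaluation, or a conic combination thereof on $U_n$) separating it from $\Sigma^{A_n}$. I expect the main obstacle to be the bookkeeping in the symmetry-reduced SOS computation: correctly identifying the isotypic blocks of quadratics modulo $(p_1)$, writing the Gram matrix in each block, and extracting the exact threshold slope — together with constructing, on the even-$n$ side, the separating psd form together with a clean nonnegativity certificate (à la Motzkin) showing it is not a sum of squares on $U_n$. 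The small cases $n = 2, 3$ should be handled directly and are consistent with Hilbert's ternary-quartic phenomenon.
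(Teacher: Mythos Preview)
Your overall strategy---compute the two extreme rays of each two-dimensional cone via the range of $p_4$ on $\{p_1=0,\ p_2=1\}$, describe $\Sigma^{A_n}$ by symmetry-reducing the Gram matrix over the isotypic decomposition of quadratics, then compare boundary slopes---is exactly the paper's. The parity picture you sketch (balanced $\pm$-vector available only when $n+1$ is even) is also correct and is precisely what drives the dichotomy.

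There is, however, a concrete error that would derail the argument as written. You assert that $p_2^2$ lies on the boundary of $\Sigma^{A_n}$ and hence gives one shared extreme ray ``for free.'' This is false: $p_2 = \sum x_i^2$ is strictly positive on $U_n\setminus\{0\}$, so $p_2^2$ has no nontrivial zero there and sits in the \emph{interior} of both cones. Neither extreme ray is a square of an invariant. The extreme ray that \emph{is} shared by $P^{A_n}$ and $\Sigma^{A_n}$ for all $n$ is the ray through $G_n = p_2^2 - \tfrac{n(n+1)}{n^2-n+1}\,p_4$, and it comes out of the symmetry reduction: modulo $(p_1)$, the Reynolds image of $(\mathbf{x}_1-\mathbf{x}_2)^2(\mathbf{x}_3-\mathbf{x}_4)^2$ (the $(n-1,2)$-isotypic block you mention) is a positive multiple of $G_n$. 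So you cannot skip half of the SOS computation; both SOS extreme rays require the block analysis. Once that is done, the paper finds $\Sigma^{A_n}$ is the conic hull of $p_4-\tfrac{1}{n+1}p_2^2$ and $(n^2-n+1)p_2^2-n(n+1)p_4$, and the comparison with $F_n$ goes exactly as you anticipate---though the paper certifies the even-$n$ failure by showing the needed conic coefficient is negative, rather than by a dual functional.
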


 Note, we have $P^{A_n}= \Sigma^{A_n}$ by Hilbert's classification for all $n \leq 3$. 
 We will provide a proof of Theorem \ref{thm:main} in Subsection \ref{subsection Proof of the theorem}. Our strategy is as follows. First, we calculate the extremal rays of the two-dimensional cone $P^{A_n}$. Second, we give a description of $\Sigma^{A_n}$ using symmetry reduction. Third, we show that when $n$ is even then one of the extremal rays is not a sum of squares, while for odd $n$ both extremal rays are sum of squares.

 To motivate the fundamental difference between $S_n$-invariant and $A_n$-invariant nonnegative quartics we start with an overview on nonnegativity in Subsection \ref{subsection Nonnegativity vs U nonengativity}.

\subsection{Global nonnegativity versus nonnegativity on $U_n$} \label{subsection Nonnegativity vs U nonengativity}
We motivate the subtle difference between globally nonnegative forms and forms nonnegative on $U_n$ in the vector space $\langle p_2^2,p_4\rangle_\R$. For $n \geq 3$, the vector space of symmetric $(n+1)$-variate quartics is five dimensional and is spanned by the following products of power sum polynomials $$p_1^4,  p_2p_1^2, p_3p_1,p_2^2,p_4 \, .$$  For any $n \geq 3$, there exist $(n+1)$-variate symmetric quartic psd forms that are not sums of squares \cite{G-K-R}. For instance, there exists the following uniform example \cite{Acevedo-1} \[\mathfrak{f}_n:=4p_1^4-5p_2p_1^2-\frac{139}{20}p_3p_1+4p_2^2+4p_4\] which is always nonnegative but never a sum of squares for any number of variables $ \geq 4$. Note however, that restricting to the subspace $U_n$ gives $4(p_2^2+p_4)$. Thus $\mathfrak{f}_n$ is a sum of squares modulo the ideal $(p_1)$. The form $\mathfrak{f}_n$ can therefore not be used as a counter example for the reflection groups of type $A$.

We show that any psd form in the vector space $\langle p_2^2,p_4\rangle_\R$ is a sum of squares. The proposition follows also from the nonnegativity versus sums of squares classification in type $B$ \cite{G-K-R-2}. The quartics result for type $B$ was first observed by Choi, Lam and Reznick.

\begin{prop} \label{prop:symm4}
    Let $f=ap_2^2+bp_4$ be a nonnegative $(n+1)$-ary symmetric form, where $a,b \in \R$. Then $f$ is a sum of squares. 
\end{prop}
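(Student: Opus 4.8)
The plan is to show that the psd form $f = ap_2^2 + bp_4$ (with $a,b \in \R$) lies in the sum-of-squares cone of symmetric $(n+1)$-ary quartics, and the cleanest route is to reduce to the smallest number of variables that can detect nonnegativity. First I would determine exactly which pairs $(a,b)$ give a psd form: since $p_4 = \sum \mathbf{x}_i^4 \geq 0$ and $p_2^2 = (\sum \mathbf{x}_i^2)^2 \geq 0$ everywhere, any $(a,b)$ with $a \geq 0$ and $b \geq 0$ is trivially a sum of squares (indeed $p_2^2$ is a square and $p_4 = \sum (\mathbf{x}_i^2)^2$). So the interesting regime is when exactly one of $a, b$ is negative. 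If $b < 0$, testing $f$ on the point $e_1 = (1,0,\ldots,0)$ gives $a + b \geq 0$, so $a \geq -b > 0$; conversely if $a + b \geq 0$ and $a \geq 0$, I claim $f$ is psd, because by Cauchy–Schwarz $p_2^2 = (\sum \mathbf{x}_i^2)^2 \leq (n+1)\sum \mathbf{x}_i^4$ is the wrong direction — instead one uses $p_2^2 = (\sum \mathbf{x}_i^2)^2 \geq \sum \mathbf{x}_i^4 = p_4$ pointwise (expanding the square, the cross terms $\mathbf{x}_i^2\mathbf{x}_j^2$ are nonnegative). Hence $f = ap_2^2 + bp_4 = (a+b)p_2^2 + b(p_4 - p_2^2)$ when $b \le 0$, a nonnegative combination of $p_2^2$ (a square) and $p_2^2 - p_4 = \sum_{i \neq j} \mathbf{x}_i^2\mathbf{x}_j^2 = 2\sum_{i<j}(\mathbf{x}_i\mathbf{x}_j)^2$, which is manifestly a sum of squares.

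The remaining case is $a < 0$, $b > 0$. Testing on $(1,1,\ldots,1)$ (all ones, $n+1$ entries) gives $(n+1)^2 a + (n+1) b \geq 0$, i.e. $b \geq -(n+1)a > 0$; and testing on more balanced points will not give a stronger constraint than the all-ones vector, so the psd condition here is precisely $b + (n+1)a \geq 0$ together with $b \ge 0$. I would then write $f = b\,p_4 + a\,p_2^2$ and seek an explicit sum-of-squares certificate. The natural candidates are the squares of the linear-combination-of-squares forms $(\mathbf{x}_i^2 - \mathbf{x}_j^2)$ and $(\mathbf{x}_i^2 - \tfrac{1}{n+1}p_2)$; concretely, $\sum_{i=1}^{n+1}\big(\mathbf{x}_i^2 - \tfrac{1}{n+1}p_2\big)^2 = p_4 - \tfrac{1}{n+1}p_2^2 \ge 0$, which is a sum of squares and exactly spans the extremal ray $b + (n+1)a = 0$ (up to scaling, $a = -\tfrac{1}{n+1}$, $b = 1$). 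Therefore on the boundary ray $a<0$ we have the sos certificate $p_4 - \tfrac1{n+1}p_2^2 = \sum_i (\mathbf{x}_i^2 - \tfrac1{n+1}p_2)^2$, and a general psd $f$ with $a < 0 < b$ is the nonnegative combination $f = (n+1)(-a)\big(p_4 - \tfrac1{n+1}p_2^2\big) + \big(b+(n+1)a\big)p_4$, both summands being sums of squares.

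Putting the cases together: the cone $P$ of psd forms in $\langle p_2^2, p_4\rangle_\R$ is a two-dimensional cone with extremal rays spanned by $p_2^2$ and by $p_4 - \tfrac{1}{n+1}p_2^2$, and I have exhibited each extremal ray as a sum of squares, hence every element of $P$ — being a nonnegative combination of the two extremal generators — is a sum of squares. I expect the main obstacle to be the bookkeeping needed to verify that the two vectors $p_2^2$ and $p_4 - \tfrac1{n+1}p_2^2$ really do generate the full psd cone, i.e. that no point of $U_n$ or of $\R^{n+1}$ imposes a constraint tighter than the two I used ($e_1$ for the $b$-side and $(1,\ldots,1)$ for the $a$-side); this amounts to checking that minimizing $ap_2^2 + bp_4$ over the sphere $p_2 = 1$ is attained either at a coordinate vector or at the barycentric vector, which follows from a short Lagrange-multiplier or rearrangement argument on $\sum \mathbf{x}_i^4$ subject to $\sum \mathbf{x}_i^2 = 1$. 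Once that is in hand, the sum-of-squares certificates above finish the proof; note this also re-proves the classical type-$B$ quartic statement of Choi–Lam–Reznick referenced above.
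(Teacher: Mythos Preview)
Your argument is correct and follows essentially the same route as the paper: both identify the two extremal rays of the psd cone in $\langle p_2^2, p_4\rangle_\R$ (you by evaluating at $e_1$ and $(1,\ldots,1)$, the paper by the substitution $\mathbf{x}_i^2 \mapsto \mathbf{x}_i$ onto the simplex) and then give explicit sos certificates for each boundary form. One small slip: in your closing summary you name the extremal rays as $p_2^2$ and $p_4 - \tfrac{1}{n+1}p_2^2$, but the first should be $p_2^2 - p_4$ (your case analysis already handles this correctly, so the proof itself is unaffected); also, your certificate $p_4 - \tfrac{1}{n+1}p_2^2 = \sum_i(\mathbf{x}_i^2 - \tfrac{1}{n+1}p_2)^2$ is a valid alternative to the paper's $\sum_{i<j}(\mathbf{x}_i^2 - \mathbf{x}_j^2)^2$ for the rescaled form $(n+1)p_4 - p_2^2$.
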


\begin{proof}
Since $f$ is an even symmetric form, nonnegativity of $f$ is equivalent to nonnegativity of $ap_1^2+bp_2$ on the probability simplex $\Delta_{n}:= \{ x \in \R_{\geq 0}^{n+1} : \sum_{i=1}^{n+1} x_i =1\}$. Then $p_1=1$ and $\frac{1}{n+1} \leq  p_2 \leq 1$ \cite{Acevedo-2}. In particular, we need to distinguish three cases depending on the sign of $a$. We can suppose $p_2=1$ and $\frac{1}{n+1}\leq p_4 \leq 1$. 
\begin{enumerate}
    \item If $a = 0$ we have $bp_4 $ nonnegative implies $b \geq 0$ and thus we have a sum of squares.
    \item If $a > 0$ we suppose without loss of generality that $a=1$ and we have $1+bp_2 \geq 0$ on $\Delta_{n}$ which implies $b \geq -1$. However, the form $$p_{2}^2-p_4 = p_4+2\sum_{i < j}\mathbf{x}_i^2\mathbf{x}_j^2-p_4=2\sum_{i < j}\mathbf{x}_i^2\mathbf{x}_j^2$$ on the boundary of the psd cone is clearly sos.
    \item If $a < 0$ we suppose $a=-1$ and have $-1+bp_2 \geq 0$ on $\Delta_{n}$ implies $b \geq n+1$. The form $(n+1)p_4-p_2^2$ on the boundary of the psd cone is a sum of squares since
    \begin{align*}
      (n+1)p_4-p_2^2 =  np_4-2\sum_{i < j}\mathbf{x}_i^2\mathbf{x}_j^2  = \sum_{i < j}(\mathbf{x}_i^2-\mathbf{x}_j^2)^2.
    \end{align*}
\end{enumerate}
\end{proof}

This subtle but important difference of nonnegativity on $\R^{n+1}$ and on $U_n$ has important structural consequences regarding $A_n$-invariant sums of squares.

\subsection{PSD $A_n$ invariant quartics} \label{subsection:PSD invariant}
A symmetric $(n+1)$-variate polynomial which is nonnegative on the linear subspace $U_n$ must not necessarily be globally nonnegative (see e.g. the polynomial $G_n$ for any $n$ and $F_n$ for any even $n$ in Lemma \ref{lem:extr A psd}). 
Since we are considering homogeneous invariant polynomials we have by biduality of convex cones (\cite{blekherman2012semidefinite}, Lemma 4.18.) the following Lemma.
\begin{lemma}
The boundary of $P^{A_n}$ consists of the forms $f=a \cdot p_2^2+b \cdot p_4$ for which there exists $0 \neq z \in U_n$ such that $f(z) = 0$.
\end{lemma}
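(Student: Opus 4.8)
The plan is to present $P^{A_n}$ as the dual cone of an explicit two-dimensional cone; this is the concrete form taken here by the biduality of convex cones from (\cite{blekherman2012semidefinite}, Lemma 4.18). I would first identify $\langle p_2^2,p_4\rangle_\R$ with $\R^2$ by writing $f=ap_2^2+bp_4\leftrightarrow(a,b)$, and attach to each $0\neq z\in U_n$ the evaluation functional $\ell_z\colon f\mapsto f(z)$, which in these coordinates is the vector $v(z):=(p_2^2(z),p_4(z))$. Since $p_2^2$ and $p_4$ are globally nonnegative forms, $v(z)$ lies in the closed first quadrant, and $v(z)\neq 0$ whenever $z\neq 0$. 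As evaluation at a point of $U_n$ descends to $\R[\mathbf{x}]/(p_1)$, Definition~\ref{def: def of a psd} reads $P^{A_n}=\{f:\ell_z(f)\geq 0\text{ for all }0\neq z\in U_n\}=C^\vee$, where $C$ is the closed convex conical hull of $\{v(z):0\neq z\in U_n\}$.

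Next I would determine $C$ explicitly. By homogeneity $v(tz)=t^{4}v(z)$, so $C=\operatorname{cone}(M)$, where $M:=\{v(z):z\in U_n,\ \|z\|=1\}$ is the continuous image of the unit sphere of $U_n$, hence compact and connected. On that sphere $p_2(z)=\|z\|^{2}=1$, so $M$ lies on the line of first coordinate $1$, and by continuity of $p_4$ we get $M=\{1\}\times[\alpha,\beta]$ for some $0<\alpha\leq\beta$, with $\alpha$ and $\beta$ attained at points $z_{\min},z_{\max}\in U_n$. Since $p_2^2$ and $p_4$ are linearly independent on $U_n$ (evaluate at two suitable points of $U_n$), we have $\alpha<\beta$, so $C=\operatorname{cone}(M)$ is the closed, pointed, two-dimensional wedge spanned by $\R_{\geq0}\,v(z_{\min})$ and $\R_{\geq0}\,v(z_{\max})$. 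Dualising, $P^{A_n}=C^\vee=\{f:f(z_{\min})\geq0\text{ and }f(z_{\max})\geq0\}$; equivalently, on the unit sphere of $U_n$ one has $f(z)=a+b\,p_4(z)$ with $p_4(z)$ running through $[\alpha,\beta]$, so $f\geq0$ there precisely when $f(z_{\min})\geq0$ and $f(z_{\max})\geq0$. In particular $P^{A_n}$ is a two-dimensional pointed cone with nonempty interior, $p_2^2$ being strictly positive on $U_n\setminus\{0\}$.

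With this explicit description the two inclusions are immediate. The boundary of the wedge $P^{A_n}=\{f:f(z_{\min})\geq0,\ f(z_{\max})\geq0\}$ equals $\{f\in P^{A_n}:f(z_{\min})=0\}\cup\{f\in P^{A_n}:f(z_{\max})=0\}$, so every boundary form vanishes at the nonzero point $z_{\min}$ or $z_{\max}$ of $U_n$. Conversely, if $f\in P^{A_n}$ and $f(z)=0$ for some $0\neq z\in U_n$, then $\ell_z$ is a nonzero functional in $(P^{A_n})^\vee$ (its first coordinate is $\|z\|^{4}>0$, and it is nonnegative on $P^{A_n}$ by definition) vanishing on $f$, so $f$ cannot be an interior point of $P^{A_n}$; hence $f\in\partial P^{A_n}$, and $f=0$ is a boundary point of the pointed cone in any case. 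The genuinely non-formal part is the determination of $C$: passing to the compact, connected unit sphere turns the conical hull into the cone over a line segment, which is automatically closed and whose two extreme rays are attained by honest evaluations $\ell_{z_{\min}},\ell_{z_{\max}}$ — without this one would only know $C$ to be the \emph{closure} of a conic hull and would have to argue that the supporting functionals of $P^{A_n}$ are not merely limits of evaluations. Everything else is the standard boundary-versus-vanishing dictionary for cones of nonnegative forms, whose abstract form is exactly that biduality statement.
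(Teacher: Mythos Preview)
Your proposal is correct and follows exactly the approach the paper indicates: the paper does not actually prove this lemma but simply invokes biduality of convex cones (\cite{blekherman2012semidefinite}, Lemma 4.18), and your argument is a faithful, self-contained unpacking of that citation in the concrete two-dimensional setting. The only addition worth noting is that you make explicit the compactness/connectedness step (reducing to the unit sphere of $U_n$ so that the evaluation cone $C$ is the cone over a genuine closed segment and its extreme rays are attained by honest point evaluations), which is precisely the content the paper leaves implicit in the words ``since we are considering homogeneous invariant polynomials''.
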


In analogy to the proof of Proposition \ref{prop:symm4} we will analyse the maximum and the minimum of $p_4$ on the semialgebraic set defined by $p_2=1$ and $p_1=0$. 
 
\begin{lemma} \label{lem:extr A psd}
The extremal $(n+1)$-ary $A_{n}$-invariant psd quartics are
$$G_n := p_{2,2}-\frac{1}{\beta}p_4 \mbox{ and } F_n:= -p_{2,2}+\frac{1}{\alpha}p_4,$$ where \ $\displaystyle \beta = \frac{1-n+n^2}{n+n^2}$ and $\alpha = \left\{%
\begin{array}{ll}
  \displaystyle  \frac{1}{n+1} & \mbox { if } \  n \ \mbox{is odd}, \\
\displaystyle  \frac{4+2n+n^2}{2n+3n^2+n^3} & \mbox { if }  \ n \ \mbox{is even}.  \\
\end{array}\right. $
\end{lemma}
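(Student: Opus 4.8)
The plan is to optimize $p_4$ on the semialgebraic set $S := \{ x \in U_n : p_2(x) = 1\}$, i.e.\ on the intersection of the unit sphere in $U_n$ with the hyperplane $p_1 = 0$. Since $P^{A_n}$ is a two-dimensional cone and every boundary form $a\, p_2^2 + b\, p_4$ must vanish at some $0 \neq z \in U_n$ (by the preceding Lemma), the two extremal rays are determined exactly by the maximum $M$ and the minimum $m$ of $p_4$ on $S$: if $z$ achieves the maximum then $p_4 - M p_2^2$ vanishes at $z$ and is $\leq 0$ on $U_n$ (up to scaling it is $G_n$), and if $z$ achieves the minimum then $p_4 - m p_2^2$ vanishes at $z$ and is $\geq 0$ on $U_n$ (up to scaling it is $F_n$). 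So the whole statement reduces to computing these two extrema, and I would present it that way: first reduce to the constrained optimization, then solve it.

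First I would set up Lagrange multipliers for $\min/\max p_4$ subject to $p_2 = 1$, $p_1 = 0$. The stationarity condition is $4 x_i^3 = 2\lambda x_i + \mu$ for all $i$, so each coordinate $x_i$ is a root of the fixed cubic $4t^3 - 2\lambda t - \mu = 0$; hence the optimizers take at most three distinct values among the coordinates. This is the standard ``few distinct values'' reduction for symmetric optimization. I would then parametrize a critical point by: value $a$ with multiplicity $k$, value $b$ with multiplicity $\ell$, value $c$ with multiplicity $n+1-k-\ell$, impose $p_1 = 0$ and $p_2 = 1$, and search for the configuration minimizing/maximizing $p_4$. For the minimum I expect the balanced-as-possible configuration (coordinates taking two values, roughly half and half, with opposite signs) — which is why the answer splits by parity of $n$ (equivalently parity of $n+1$, the number of variables): when $n$ is odd there are $n+1$ coordinates, an even number, so a perfectly balanced $\pm$ split exists, while for even $n$ one has $n+1$ odd and the optimal split is unbalanced, producing the messier value of $\alpha$. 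For the maximum I expect the most concentrated admissible configuration (one coordinate large, the rest equal and small — the ``point'' of the simplex analogue), giving the uniform formula for $\beta$.

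The key steps in order: (1) invoke the boundary Lemma to reduce to finding $\max$ and $\min$ of $p_4$ on $S$; (2) Lagrange multipliers $\Rightarrow$ critical coordinates take $\leq 3$ values; (3) for each candidate multiplicity pattern, solve the two constraints $p_1 = 0$, $p_2 = 1$ and evaluate $p_4$; (4) identify the minimizing pattern — a two-value $\pm$ configuration, balanced if $n$ odd and unbalanced if $n$ even — and compute $\alpha$; (5) identify the maximizing pattern — one coordinate against $n$ equal coordinates — and compute $\beta = \frac{1 - n + n^2}{n + n^2}$; (6) translate back: the extremal psd quartics are $G_n = p_{2,2} - \frac{1}{\beta} p_4$ and $F_n = -p_{2,2} + \frac{1}{\alpha} p_4$, using $p_{2,2} = \frac{1}{2}(p_2^2 - p_4)$ on $\R[\mathbf{x}]/(p_1)$ to match the stated normalization. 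A small bookkeeping point I would be careful about is the relation between $p_2^2$ and $p_{2,2}$ in the quotient ring, so that the constants come out exactly as written.

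The main obstacle is step (4): proving that the two-value $\pm$-split genuinely is the global minimizer of $p_4$ on $S$ and pinning down the optimal multiplicities in the even case. A naive Lagrange analysis produces several families of critical points (including ones with three distinct coordinate values), and one must show these are never below the claimed minimum. I would handle this either by a direct comparison argument — showing that, with $p_1$ and $p_2$ fixed, replacing a three-value configuration by an appropriate two-value one strictly decreases $p_4$ (a convexity/smoothing estimate on $t \mapsto t^4$ subject to linear and quadratic constraints) — or by brute-force case analysis over the finitely many multiplicity patterns, using the constraints to reduce each to a one-parameter problem and checking monotonicity. The even/odd dichotomy should fall out naturally from this comparison, since the best two-value split is forced to be unbalanced precisely when $n+1$ is odd.
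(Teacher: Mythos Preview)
Your reduction to optimizing $p_4$ on $\{p_1=0,\;p_2=1\}$ and the identification of the extremal patterns (one-against-$n$ for the maximum, balanced $\pm$ split for the minimum, with the odd/even dichotomy coming from the parity of $n+1$) matches the paper exactly. The substantive difference is in how the search is cut down. You use Lagrange multipliers, which only tells you the coordinates lie among the roots of a cubic, hence take at most \emph{three} values; you then have to argue separately (smoothing, or case-by-case over multiplicity patterns) that three-value critical points never beat the two-value ones. The paper instead invokes the half-degree principle for symmetric polynomials (Timofte/Riener), which gives directly that optima of the degree-$4$ objective under the symmetric constraints $p_1=0,\,p_2=1$ are attained at points with at most \emph{two} distinct coordinates. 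That collapses your ``main obstacle'' to nothing: one is left with a one-parameter family indexed by the integer $1\le l\le n$, and the paper then verifies the closed-form max and min over $l$ by three short algebraic lemmas. Your route is workable, but the half-degree principle is exactly the tool that removes the three-value case you were worried about.

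One genuine slip: $p_{2,2}$ here is the power-sum convention $p_{(2,2)}=p_2\cdot p_2=p_2^2$, not the monomial symmetric polynomial $m_{2,2}=\tfrac12(p_2^2-p_4)$. The paper uses $G_n=p_2^2-\tfrac{1}{\beta}p_4$ and $F_n=-p_2^2+\tfrac{1}{\alpha}p_4$ throughout (cf.\ the proof of the main theorem), so your step~(6) ``translation'' via $p_{2,2}=\tfrac12(p_2^2-p_4)$ would produce the wrong constants. Drop that identity and the normalization is immediate.
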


It follows directly from Proposition \ref{prop:symm4} that all of these extremal forms, but $F_n$ when $n$ is odd, are not globally nonnegative and cannot be a sum of squares in the polynomial ring $\R[\mathbf{x}]$.

\begin{proof}[Proof of Lemma \ref{lem:extr A psd}]
Since the quartics are homogeneous it is sufficient to analyse the minimum and maximum value of $p_4$ on $\mathbb{S}^n \cap U_n$. We have $p_1 =0 $ and $p_2=1$. This translates to the polynomial optimization problems 
\begin{align*}
    \min_{x \in \R} \pm p_4 \\
     \text{s.t. }   \, p_1 = 0 \\
    \quad \quad p_2=1 
\end{align*}
 By a variant of Timofte's half degree principle \cite[Theorem 1.1]{Riener2012} the extremas are attained at a point with at most $2$ different coordinates. The equality constraints transfer to the two equations
\begin{align*}
    lt+(n+1-l)s \quad = & \quad 0 \\
    lt^2+(n+1-l)s^2 \quad = & \quad 1
\end{align*}
where $0 \leq l \leq n+1$ is an integer and $s,t \in \R$ are real numbers. We observe that $l \not \in \{0,n+1\}$ which implies $1 \leq l \leq n$. For given integers $l$ and $n$ the equations provide unique solutions for $s$ and $t$ up to sign. However, inserting the solution in $p_4$ is independent of the signs of the coordinates and we have
\[ p_4(\underbrace{t,\ldots,t}_{l \text{ times}},\underbrace{s,\ldots,s}_{n+1-l \text{ times}}) =  \frac{(n+1)^2-3l(n+1)+3l^2}{(n+1)(n+1-l)l}\]
For the claim on the extremality of $F_n$ we are left with verifying 
\[ 
 \min_{ 1 \leq l \leq n, l \in \mathbb{Z}} \frac{(n+1)^2-3l(n+1)+3l^2}{(n+1)(n+1-l)l} 
\quad = \quad  
 \left\{   \begin{array}{cc}
 \displaystyle \frac{1}{n+1} & \text{ if $n$ is odd} \\
 \displaystyle  \frac{4+2n+n^2}{2n+3n^2+n^3} & \text{ if $n$ is even}
\end{array} \right.
\]
 which we do in Lemmas \ref{lem:minimum1} and \ref{lem:minimum2}, and verify
 \[ 
 \max_{ 1 \leq l \leq n, l \in \mathbb{Z}} \frac{(n+1)^2-3l(n+1)+3l^2}{(n+1)(n+1-l)l} 
\quad = 
\frac{1-n+n^2}{n+n^2}
\]
to prove that $G_n$ is extremal. This is Lemma \ref{lem:max}.
\end{proof}

\subsubsection{Verification of the extremality of $G_n$}
\begin{lemma} \label{lem:max}
 For all $n \geq 3$ we have $ \displaystyle \max_{ 1 \leq l \leq n, l \in \mathbb{Z}} \frac{(n+1)^2-3l(n+1)+3l^2}{(n+1)(n+1-l)l} $ is attained at $l = 1$ and $l=n$, and equals $\displaystyle \frac{1-n+n^2}{n+n^2}$.
\end{lemma}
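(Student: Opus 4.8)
The plan is to study the single-variable rational function obtained by relaxing the integrality of $l$, namely
\[
\phi(l) := \frac{(n+1)^2 - 3l(n+1) + 3l^2}{(n+1)(n+1-l)l}
\]
on the real interval $l \in (0, n+1)$, and to show that its maximum on the integer lattice $\{1,\dots,n\}$ is attained at the two endpoints $l=1$ and $l=n$. First I would record the symmetry $\phi(l) = \phi(n+1-l)$, which is visible from the formula (the numerator is symmetric under $l \mapsto n+1-l$ since $(n+1)^2 - 3l(n+1) + 3l^2 = (n+1-l)^2 - 3(n+1-l)\cdot \text{?}$ — more cleanly, both numerator and denominator are invariant under $l \mapsto n+1-l$). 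This halves the work: it suffices to show $\phi$ is decreasing on the relevant left half, or at least that $\phi(1) \geq \phi(l)$ for all integers $1 \leq l \leq \lceil (n+1)/2 \rceil$.

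The cleanest route is to reduce the inequality $\phi(l) \leq \phi(1)$ to a polynomial inequality. Since the denominator $(n+1)(n+1-l)l$ is positive for $1 \leq l \leq n$, the claim $\phi(l) \leq \phi(1) = \frac{1-n+n^2}{n+n^2}$ (note $\phi(1) = \frac{(n+1)^2 - 3(n+1) + 3}{(n+1)\cdot n \cdot 1} = \frac{n^2 - n + 1}{n(n+1)}$, matching the stated value) is equivalent to a polynomial inequality in $l$ and $n$ after clearing denominators. I would compute
\[
\phi(1) - \phi(l) = \frac{n^2-n+1}{n(n+1)} - \frac{(n+1)^2 - 3l(n+1)+3l^2}{(n+1)(n+1-l)l}
\]
and bring to a common denominator $n(n+1)(n+1-l)l > 0$; the numerator will be a polynomial $Q(l,n)$ that I claim factors with $(l-1)$ and $(l-n)$ as factors (since equality holds at $l=1$ and, by symmetry, at $l=n$). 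Thus $Q(l,n) = (l-1)(n-l)\cdot R(l,n)$ for some polynomial $R$, and on the range $1 \leq l \leq n$ the factor $(l-1)(n-l) \geq 0$, so the sign reduces to the sign of $R(l,n)$. A direct expansion should show $R(l,n)$ is a low-degree polynomial (degree $1$ or $2$ in $l$) with manifestly nonnegative coefficients for $n \geq 3$, or can be bounded below by an explicit positive quantity on the lattice; this finishes the maximum claim, and plugging $l=1$ gives the stated value $\frac{1-n+n^2}{n+n^2}$.

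The main obstacle I anticipate is not conceptual but the bookkeeping of the factorization: after clearing denominators the numerator $Q(l,n)$ is a polynomial of degree $3$ in $l$ with coefficients that are degree-$2$ polynomials in $n$, and one must verify cleanly that both $(l-1)$ and $(l-n)$ divide it and then that the remaining linear-or-quadratic cofactor $R(l,n)$ has the correct sign for all integers $1 \leq l \leq n$ and all $n \geq 3$. An alternative to the factorization, should the cofactor's sign be delicate near the middle of the range, is to argue monotonicity of $\phi$ directly: differentiate $\phi$ (or, better, differentiate the simpler auxiliary function after substituting $m = l(n+1-l)$, since the numerator is $(n+1)^2 - 3l(n+1) + 3l^2 = (n+1)^2 - 3m$ — wait, $3l^2 - 3l(n+1) = -3l(n+1-l) = -3m$, so the numerator is exactly $(n+1)^2 - 3m$ and the denominator is $(n+1)m$), whence $\phi = \frac{(n+1)^2 - 3m}{(n+1)m} = \frac{(n+1)}{m} - \frac{3}{n+1}$, which is a strictly decreasing function of $m = l(n+1-l)$. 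Since $m = l(n+1-l)$ is minimized over $1 \leq l \leq n$ at the endpoints $l=1$ and $l=n$ (where $m = n$) and maximized in the middle, $\phi$ is maximized exactly at $l \in \{1, n\}$, giving $\phi = \frac{n+1}{n} - \frac{3}{n+1} = \frac{(n+1)^2 - 3n}{n(n+1)} = \frac{n^2 - n + 1}{n(n+1)}$. This substitution $m = l(n+1-l)$ is in fact the slick proof and avoids all heavy computation, so I would present that as the primary argument.
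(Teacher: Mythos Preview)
Your proposal is correct. Your first approach --- clearing denominators and factoring the numerator of $\phi(1)-\phi(l)$ as $(l-1)(n-l)R(l,n)$ --- is exactly what the paper does; in fact the cofactor turns out to be simply $R(l,n)=(n+1)^2$, so the anticipated sign analysis is trivial and the whole argument collapses to the single equivalence
\[
\phi(1)\geq\phi(l)\quad\Longleftrightarrow\quad (l-1)(n-l)(n+1)^2\geq 0.
\]
Your second approach, the substitution $m=l(n+1-l)$ rewriting $\phi=\dfrac{n+1}{m}-\dfrac{3}{n+1}$, is a genuinely different and slicker route: it replaces any algebraic manipulation by the monotonicity of $1/m$ together with the elementary fact that the concave parabola $l\mapsto l(n+1-l)$ is minimized at the endpoints of $\{1,\dots,n\}$. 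The paper's method has the virtue of giving the exact factorization (hence immediately identifying the equality cases), while your substitution makes the structure transparent and would scale more gracefully if one needed the analogous statements for the minimum (Lemmas~\ref{lem:minimum1} and~\ref{lem:minimum2}) as well, since those also reduce to locating extrema of $m$ over the integer range.
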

\begin{proof}
    We calculate 
    \begin{align*}
        \frac{(n+1)^2-3(n+1)+3}{(n+1)n} \quad \geq & \quad \frac{(n+1)^2-3l(n+1)+3l^2}{(n+1)(n+1-l)l} \\
        \Leftrightarrow (-1+l)(n-l)(n+1)^2 \quad \geq & \quad  0.
    \end{align*}
    Note, for $1 \leq l \leq n$ the inequality is tight when $l \in \{1,n\}$ and otherwise strict.
\end{proof}

\subsubsection{Verification of the extremality of $F_n$}

\begin{lemma}  \label{lem:minimum1}
We have
 \[ \displaystyle \min_{ 1 \leq l \leq n} \frac{(n+1)^2-3l(n+1)+3l^2}{(n+1)(n+1-l)l} \quad =  \quad \frac{1}{n+1}\] and
  \[ \displaystyle \min_{ 1 \leq l \leq n, l \in \mathbb{Z}} \frac{(n+1)^2-3l(n+1)+3l^2}{(n+1)(n+1-l)l}  \quad > \quad   \frac{1}{n+1}\]if and only if $n$ is even.  
\end{lemma}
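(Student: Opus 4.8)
The plan is to reduce both claims to one elementary identity. Write $m = n+1$ and set
\[ g(l) = \frac{m^2 - 3lm + 3l^2}{m(m-l)l}, \]
the function appearing in the statement; on the interval $l \in [1,n]$ the denominator $m(m-l)l$ is positive, and the numerator is a quadratic in $l$ with discriminant $9m^2 - 12m^2 = -3m^2 < 0$, hence positive, so $g > 0$ there and in particular $g$ is continuous on the compact set $[1,n]$.

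First I would bring $g(l) - \tfrac{1}{m}$ over the common denominator $m(m-l)l$: its numerator is $m^2 - 3lm + 3l^2 - (m-l)l = m^2 - 4lm + 4l^2 = (m-2l)^2$. Therefore
\[ g(l) - \frac{1}{n+1} = \frac{(n+1-2l)^2}{(n+1)(n+1-l)l}, \]
which is nonnegative on $(0, n+1)$ and vanishes precisely at $l = \tfrac{n+1}{2}$. Since $\tfrac{n+1}{2} \in [1,n]$ for $n \geq 3$, the minimum of $g$ over real $l \in [1,n]$ equals $\tfrac{1}{n+1}$, which proves the first equality.

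For the second assertion, note that $\tfrac{n+1}{2}$ is an integer exactly when $n$ is odd. If $n$ is odd, the integer $l = \tfrac{n+1}{2}$ lies in $\{1,\dots,n\}$ and attains the value $\tfrac{1}{n+1}$, so the minimum over integers equals $\tfrac{1}{n+1}$ and is not strictly larger. If $n$ is even, then $n+1-2l$ is a nonzero integer for every $l \in \{1,\dots,n\}$, so $(n+1-2l)^2 \geq 1$ and hence $g(l) > \tfrac{1}{n+1}$ for each admissible integer $l$; since there are only finitely many such $l$, the minimum is still $> \tfrac{1}{n+1}$. This gives the ``if and only if''. The only real content is spotting the factorization $m^2 - 4lm + 4l^2 = (m-2l)^2$; once this is in hand both parts follow immediately, so I expect no genuine obstacle in this lemma.
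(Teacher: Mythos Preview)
Your proof is correct and follows essentially the same approach as the paper: both compute $g(l)-\tfrac{1}{n+1}$ over the common denominator $(n+1)(n+1-l)l$ and observe that the numerator factors as $(n+1-2l)^2$, from which both claims follow. Your write-up adds a few extra observations (positivity of the numerator via the discriminant, and $(n+1-2l)^2\geq 1$ for even $n$), but the key identity and the logic are the same.
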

\begin{proof}
    For $1 \leq l \leq n$ we have \begin{align*}
    & \frac{(n+1)^2-3l(n+1)+3l^2}{(n+1)(n+1-l)l} -\frac{1}{n+1}  & \geq 0 \\
    \iff &\frac{(n+1)^2-3l(n+1)+3l^2}{(n+1)(n+1-l)l} -\frac{(n+1-l)l}{(n+1)(n+1-l)l} &\geq 0 \\
    \iff &(n+1)^2-3l(n+1)+3l^2-(n+1-l)l &\geq 0 \\ 
    \iff &(n+1-2l)^2 &\geq 0
\end{align*}
The last inequality is tight on integer values $1 \leq l \leq n$ if and only if $n+1$ is even.
\end{proof}

\begin{lemma} \label{lem:minimum2}
    If $n$ is even,  then $ \displaystyle \min_{ 1 \leq l \leq n, l \in \mathbb{Z}} \frac{(n+1)^2-3l(n+1)+3l^2}{(n+1)(n+1-l)l} $ is attained at $\displaystyle l = \frac{n}{2}$ and $\displaystyle l=\frac{n}{2}+1$, and equals $\displaystyle \frac{4+2n+n^2}{2n+3n^2+n^3}.$
\end{lemma}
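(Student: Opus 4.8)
The plan is to evaluate the quantity $Q(l) := \frac{(n+1)^2 - 3l(n+1) + 3l^2}{(n+1)(n+1-l)l}$ at the candidate points $l = \frac n2$ and $l = \frac n2 + 1$ and then show it is a lower bound for all integer $1 \le l \le n$. For the value, substitute $l = \frac n2$: the numerator becomes $(n+1)^2 - \frac{3n}{2}(n+1) + \frac{3n^2}{4} = \frac{4(n+1)^2 - 6n(n+1) + 3n^2}{4} = \frac{n^2 + 2n + 4}{4}$, and the denominator is $(n+1)(n+1 - \frac n2)\frac n2 = (n+1)\frac{n+2}{2}\cdot\frac n2 = \frac{n(n+1)(n+2)}{4}$, so $Q(\tfrac n2) = \frac{n^2+2n+4}{n(n+1)(n+2)} = \frac{4+2n+n^2}{2n+3n^2+n^3}$, which is the claimed value. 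Since $Q(l) = Q(n+1-l)$ by the symmetry of the numerator and denominator under $l \mapsto n+1-l$, the point $l = \frac n2 + 1 = n+1 - \frac n2$ gives the same value, so no separate computation is needed there.

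Next I would establish that this is the minimum over integers. The cleanest route is to reduce to the real relaxation already handled in Lemma~\ref{lem:minimum1}: there it is shown that $Q(l) - \frac{1}{n+1} = \frac{(n+1-2l)^2}{(n+1)(n+1-l)l} \ge 0$, with equality only at the (now non-integer, since $n$ is even) point $l = \frac{n+1}{2}$. Thus for integer $l$ one has
\begin{align*}
Q(l) - \frac{1}{n+1} \;=\; \frac{(n+1-2l)^2}{(n+1)(n+1-l)l},
\end{align*}
and over integers $1 \le l \le n$ the numerator $(n+1-2l)^2$ is minimized when $|n+1-2l|$ is smallest, i.e. when $2l$ is as close to $n+1$ as possible; since $n$ is even, $n+1$ is odd, so the closest $2l$ can come is distance $1$, attained exactly at $l = \frac n2$ and $l = \frac n2 + 1$, where $(n+1-2l)^2 = 1$. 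However, minimizing the full fraction is not immediate from minimizing the numerator alone, because the denominator $(n+1-l)l$ also varies; so the final step is to show that among all integer $l$ the fraction $\frac{(n+1-2l)^2}{(n+1-l)l}$ is genuinely smallest at $l \in \{\frac n2, \frac n2+1\}$.

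This last point is the main obstacle, but it is manageable: at $l = \frac n2$ the denominator $(n+1-l)l = \frac{n(n+2)}{4}$ is in fact the \emph{largest} possible value of $(n+1-l)l$ over integers (the parabola $l \mapsto (n+1-l)l$ is maximized at $l = \frac{n+1}{2}$, whose nearest integers are again $\frac n2, \frac n2+1$), while the numerator is simultaneously the smallest; hence at these two points the fraction is simultaneously numerator-minimal and denominator-maximal, forcing it to be the global integer minimum. Concretely, I would argue that for any other integer $l' $ in range one has $(n+1-2l')^2 \ge 9$ (the next odd square after $1$) and $(n+1-l')l' \le \frac{n(n+2)}{4}$, so $Q(l') - \frac{1}{n+1} \ge \frac{9}{(n+1)\cdot \frac{n(n+2)}{4}} > \frac{1}{(n+1)\cdot\frac{n(n+2)}{4}} = Q(\tfrac n2) - \frac{1}{n+1}$, which both proves minimality and confirms strict inequality away from $l \in \{\frac n2, \frac n2 + 1\}$. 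Combining with the value computation above completes the proof.
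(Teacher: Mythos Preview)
Your proof is correct. The value computation, the symmetry observation $Q(l)=Q(n+1-l)$, and the simultaneous numerator-minimal/denominator-maximal argument all go through; in particular, for integer $l\notin\{\tfrac n2,\tfrac n2+1\}$ the odd number $n+1-2l$ satisfies $|n+1-2l|\ge 3$, so $(n+1-2l)^2\ge 9$, and your inequality chain is valid.

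The paper takes a slightly different and more direct route: instead of passing through the real minimum $\tfrac{1}{n+1}$ from Lemma~\ref{lem:minimum1}, it subtracts the claimed integer minimum directly and simplifies
\[
Q(l)-\frac{4+2n+n^2}{2n+3n^2+n^3}=\frac{(n+1)\bigl((2l-(n+1))^2-1\bigr)}{l(n+1-l)n(n+2)},
\]
whose numerator is visibly nonnegative for integer $l$ because $2l-(n+1)$ is odd. Your approach reuses the identity already established in Lemma~\ref{lem:minimum1} and adds the conceptual observation that the same two integers simultaneously minimize the numerator and maximize the denominator of $Q(l)-\tfrac{1}{n+1}$; the paper's approach avoids that extra step at the cost of one fresh algebraic simplification. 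Both arguments ultimately rest on the same parity fact $(2l-(n+1))^2\ge 1$.
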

\begin{proof}
 Evaluating $\displaystyle \frac{(n+1)^2-3l(n+1)+3l^2}{(n+1)(n+1-l)l}$ at $ \displaystyle l=\frac{n}{2}$ and $\displaystyle l=\frac{n+2}{2}$ gives $\displaystyle \frac{4+2n+n^2}{2n+3n^2+n^3}$. 
 
 \noindent Moreover, the denominator of 
 \begin{align*}
     \frac{(n+1)^2-3l(n+1)+3l^2}{(n+1)(n+1-l)l} - \frac{4+2n+n^2}{2n+3n^2+n^3} = \frac{(n+1)(4l^2-4l(n+1)+n(n+2))}{l(n+1-l)n(n+2)}
 \end{align*}
 is strictly positive for all $1 \leq l \leq n$. The 
 numerator is also nonnegative since
 $$ 4l^2-4l(n+1)+n(n+2) = (2l-(n+1))^2-1 \geq 0$$ because $n+1$ is odd.
\end{proof}

\subsection{SOS $A_n$-invariant quartics} \label{subsection:SOS invariant}

Given the action of a reflection group, representation theory and invariant theory can be applied to effectively describe the invariant sums of squares cone. We briefly sketch the symmetry reduction for sums of squares invariant by a reflection group. More details can be found in \cite{Blek-Riener,Debus-Riener,GatPar03,Heaton-Hosten-Shankhar}.
A reflection group $G$ acts on the vector space $\R[\mathbf{x}]_{d}$ of all $(n+1)$-variate forms of degree $d$ giving it the structure of a $G$-module.  We can decompose every  $G$-module into a direct sum of its irreducible sub-modules to obtain its \textit{isotypic decomposition}. Given an isotypic decomposition one constructs a \textit{symmetry adapted basis}, which can be used to understand the invariant sums of squares of elements in $\R[\mathbf{x}]_{2d}$. We outline this in the following.

First, we note that there is a natural projection onto the invariant part of $\mathcal{F}_{n,d}$ via the so called \textit{Reynolds-Operator} of the group $G$: \[\mathcal{R}_G : \R[\mathbf{x}]_{d} \to \R[\mathbf{x}]_{d} ^G,\; f \mapsto \frac{1}{|G|}\sum_{\sigma \in G}\sigma f.\] 
Suppose that we have \[\R[\mathbf{x}]_{d}  \simeq \bigoplus_{j=1}^\ell \eta_j\mathcal{V}_j\] is the isotypic decomposition of the $G$ action on $\R[\mathbf{x}]_{d} $, i.e. $\mathcal{V}_j$ are pairwise non-isomorphic irreducible $G$-modules and each occurs with multiplicity $\eta_j \in \N$ in $\R[\mathbf{x}]_{d} $. A symmetry adapted basis is a list  \[\{f_{11},\ldots,f_{1\eta_1},f_{21},\ldots,f_{\ell\eta_\ell}\}\]with the property  that for every $j$ there are  $G$-equivariant homomorphisms $\phi_{ji}$  which map $f_{j1}$ to $f_{ji}$  for all $1\leq i\leq \eta_j$, and furthermore that the orbit of each $f_{ji}$ spans an irreducible $G$-module isomorphic to $\mathcal{V}_j$ and the set of all orbits of all $f_{ji}$ spans $\R[\mathbf{x}]_{d} $. Given a symmetry adapted basis we can construct matrix polynomials \[B_j := \left(\mathcal{R}_G(f_{ji_1}f_{ji_2}\right)_{1 \leq i_1,i_2 \leq \eta_j} \text{ for }1 \leq j \leq \ell.\]
With these notations we have the following (see \cite[Theorem 2.6]{Debus-Riener}):
\begin{prop} \label{prop:describing invariant sos}
Let $f\in\R[\mathbf{x}]_{2d} ^G$ be an invariant form. Then $f$ is a sum of squares if and only if there exists positive semidefinite matrices $A_1,\ldots,A_\ell$ such that \[f=\sum_{j=1}^\ell \operatorname{Tr}(A_jB_j),\] where the matrix polynomials $B_j$ are constructed from a symmetry adapted bases of $\R[\mathbf{x}]_{d} $ as defined above. 
\end{prop}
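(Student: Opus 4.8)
The plan is to prove the two implications directly from the properties of a symmetry adapted basis. The statement is \cite[Theorem~2.6]{Debus-Riener}, which itself rests on the symmetry reduction of Gatermann and Parrilo \cite{GatPar03}, so what follows is a sketch of that argument adapted to our notation.

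For the easy implication I would start from a representation $f=\sum_{j=1}^\ell\operatorname{Tr}(A_jB_j)$ with $A_j\succeq 0$, diagonalize each $A_j=\sum_k v_{jk}v_{jk}^{\top}$ into rank-one positive semidefinite summands, and then simply expand: by linearity of the Reynolds operator and the definition of $B_j$ one gets $\operatorname{Tr}(A_jB_j)=\sum_k v_{jk}^{\top}B_jv_{jk}=\sum_k\mathcal R_G(g_{jk}^2)$ with $g_{jk}:=\sum_i(v_{jk})_if_{ji}\in\R[\mathbf x]_d$. Since $\mathcal R_G(g^2)=\frac1{|G|}\sum_{\sigma\in G}(\sigma g)^2$ is manifestly a sum of squares, summing over $j$ and $k$ exhibits $f$ as a sum of squares.

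For the converse I would begin with an invariant sum of squares $f=\sum_r q_r^2$, apply the Reynolds operator (using $\sigma(q^2)=(\sigma q)^2$) to obtain $f=\sum_r\mathcal R_G(q_r^2)$, and then decompose each $q_r$ along the isotypic decomposition $\R[\mathbf x]_d=\bigoplus_j\bigoplus_{i=1}^{\eta_j}W_{ji}$, $W_{ji}:=\langle G\cdot f_{ji}\rangle\simeq\mathcal V_j$. Writing the $W_{ji}$-component of $q_r$ as $\phi_{ji}(p_{r,i}^{(j)})$ for a unique $p_{r,i}^{(j)}\in W_{j1}$ (possible since $\phi_{ji}:W_{j1}\to W_{ji}$ is a $G$-equivariant isomorphism by Schur's lemma), I would next argue two things, again with Schur's lemma: first, that the $G$-invariant pairing $(p,p')\mapsto\mathcal R_G(pp')$ between isotypic components of distinct types vanishes (using that real representations are self-dual), so that only same-type contributions survive in $\mathcal R_G(q_r^2)$; second, that for each $j$ the space of invariant symmetric bilinear forms on the irreducible $\mathcal V_j$ is one-dimensional, spanned by a positive definite invariant inner product $\langle\cdot,\cdot\rangle_j$, whence — matching constants at $p=p'=f_{j1}$ — $\mathcal R_G(\phi_{ji_1}(p)\phi_{ji_2}(p'))=\|f_{j1}\|_j^{-2}\langle p,p'\rangle_j(B_j)_{i_1i_2}$. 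Substituting, $\mathcal R_G(q_r^2)=\sum_j\operatorname{Tr}(A_r^{(j)}B_j)$ where $A_r^{(j)}$ is a positive multiple of the Gram matrix of $p_{r,1}^{(j)},\dots,p_{r,\eta_j}^{(j)}$ with respect to $\langle\cdot,\cdot\rangle_j$, hence positive semidefinite; summing over $r$ and setting $A_j:=\sum_r A_r^{(j)}\succeq 0$ then gives $f=\sum_j\operatorname{Tr}(A_jB_j)$.

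I expect the main obstacle to be exactly the identity $\mathcal R_G(\phi_{ji_1}(p)\phi_{ji_2}(p'))=\|f_{j1}\|_j^{-2}\langle p,p'\rangle_j(B_j)_{i_1i_2}$. It is here that the representation theory genuinely enters: one needs that every irreducible representation of the reflection group is of real (absolutely irreducible) type, so that the space of invariant bilinear forms on it is one-dimensional and the coefficient matrices $A_r^{(j)}$ can be taken real symmetric positive semidefinite rather than Hermitian over $\mathbb C$ or $\mathbb H$, and one must check that the equivariant maps $\phi_{ji}$ mesh with the Reynolds pairing precisely so that the resulting matrix is a bona fide Gram matrix. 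Everything else — the Reynolds bookkeeping and the isotypic decomposition — is routine.
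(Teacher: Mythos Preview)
The paper does not give its own proof of this proposition; it simply quotes the result as \cite[Theorem~2.6]{Debus-Riener} and moves on. Your sketch correctly reconstructs the argument underlying that citation (the Gatermann--Parrilo symmetry reduction), and the reasoning is sound: the easy direction is just Reynolds bookkeeping, and for the converse your key identity $\mathcal R_G(\phi_{ji_1}(p)\phi_{ji_2}(p'))=\|f_{j1}\|_j^{-2}\langle p,p'\rangle_j(B_j)_{i_1i_2}$ follows exactly as you say from the one-dimensionality of the space of $G$-invariant bilinear forms on an absolutely irreducible module. Your identification of the one genuine representation-theoretic input --- that every irreducible of a finite real reflection group is of real type, so that the endomorphism algebra is $\R$ and the resulting Gram matrices are real symmetric positive semidefinite --- is precisely the point on which the argument hinges.
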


Note that calculating an isotypic decomposition  of $\R[\mathbf{x}]_{d} $ and a symmetry adapted basis can in principle be done with linear algebra (see \cite{serre}). For the case of finite groups Hubert and Bazan \cite{Hubert-Bazan} constructed an algorithm to calculate equivariants which allows for an effective determination of symmetry adapted basis for all degrees. In the case when $G \in \{A_{n-1},S_n,B_n,D_n\}$ so-called \textit{higher Specht polynomials} can be used and their construction is completely combinatorial \cite{Debus-Riener, Morita}.


We denote by $\mathbb{S}^\lambda$ the Specht module associated with a partition $\lambda$.
\begin{lemma}
For $n\geq 3$, the $S_{n+1}$ isotypic decomposition of $\R[\mathbf{x}]_{2}$ equals
  \begin{align*}
      \R[\mathbf{x}]_{2} \quad = \quad  & \langle p_2 \rangle_\R \oplus \langle p_1^2 \rangle_\R \oplus \langle p_1(\mathbf{x}_i-\mathbf{x}_j) : i < j \rangle_\R \\
      & \oplus \langle \mathbf{x}_i^2-\mathbf{x}_j^2 : i < j \rangle_\R \oplus \langle (\mathbf{x}_i-\mathbf{x}_j)(\mathbf{x}_k-\mathbf{x}_l) : \# \{i,j,k,l\} = 4\rangle \\
      & 2 \cdot \mathbb{S}^{(n)} \oplus 2 \cdot \mathbb{S}^{(n-1,1)} \oplus \mathbb{S}^{(n-2,2)}
  \end{align*}
\end{lemma}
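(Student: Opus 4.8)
The plan is to compute the decomposition of $\R[\mathbf{x}]_2$ into $S_{n+1}$-irreducibles directly, using the well-understood structure of the degree-$2$ part of a permutation-module polynomial ring. First I would recall that $\R[\mathbf{x}]_2$, the space of $(n+1)$-ary quadratic forms, has dimension $\binom{n+2}{2}$, and that as an $S_{n+1}$-module it is isomorphic to $\operatorname{Sym}^2(M)$, where $M = \langle \mathbf{x}_1,\ldots,\mathbf{x}_{n+1}\rangle_\R$ is the natural permutation module. The classical decomposition of $M$ is $M \cong \mathbb{S}^{(n+1)} \oplus \mathbb{S}^{(n,1)}$, the trivial plus the standard representation. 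So the task reduces to decomposing $\operatorname{Sym}^2\!\big(\mathbb{S}^{(n+1)} \oplus \mathbb{S}^{(n,1)}\big) \cong \operatorname{Sym}^2(\mathbb{S}^{(n+1)}) \oplus (\mathbb{S}^{(n+1)} \otimes \mathbb{S}^{(n,1)}) \oplus \operatorname{Sym}^2(\mathbb{S}^{(n,1)})$.

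The first two summands are immediate: $\operatorname{Sym}^2(\mathbb{S}^{(n+1)}) \cong \mathbb{S}^{(n)}$ (trivial, spanned by $p_1^2$) and $\mathbb{S}^{(n+1)} \otimes \mathbb{S}^{(n,1)} \cong \mathbb{S}^{(n-1,1)}$ (spanned by the $p_1(\mathbf{x}_i - \mathbf{x}_j)$). For $\operatorname{Sym}^2(\mathbb{S}^{(n,1)})$ I would compute the character value $\tfrac12\big(\chi_{\text{std}}(\sigma)^2 + \chi_{\text{std}}(\sigma^2)\big)$ and decompose it against the irreducible characters; for $n \geq 3$ this yields $\mathbb{S}^{(n-1,1)} \oplus \mathbb{S}^{(n-2,2)} \oplus \mathbb{S}^{(n)}$, which is a standard plethysm fact. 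Collecting terms gives multiplicities $\eta = 2$ for $\mathbb{S}^{(n)}$, $\eta = 2$ for $\mathbb{S}^{(n-1,1)}$, and $\eta = 1$ for $\mathbb{S}^{(n-2,2)}$, matching the claimed decomposition.

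It remains to identify the concrete spanning sets listed in the statement with these isotypic pieces. I would argue: $\langle p_2\rangle_\R$ and $\langle p_1^2\rangle_\R$ are two distinct trivial (copies of $\mathbb{S}^{(n)}$) submodules; $\langle p_1(\mathbf{x}_i - \mathbf{x}_j)\rangle$ and $\langle \mathbf{x}_i^2 - \mathbf{x}_j^2\rangle$ are two copies of the standard module $\mathbb{S}^{(n-1,1)}$ — each is the image of the standard module under an equivariant map (multiplication by $p_1$, respectively the squaring-difference map), and one checks they are nonzero and non-proportional; and $\langle (\mathbf{x}_i - \mathbf{x}_j)(\mathbf{x}_k - \mathbf{x}_l) : \#\{i,j,k,l\} = 4\rangle$ realizes the single copy of $\mathbb{S}^{(n-2,2)}$ — here I would verify it is $S_{n+1}$-stable, compute its dimension as $\tfrac{n(n-3)(n+1)}{\text{?}}$ matching $\dim \mathbb{S}^{(n-2,2)} = \tfrac{n(n-3)}{2}\cdot\frac{?}{?}$, and note irreducibility follows since the total dimension count $2\cdot 1 + 2\cdot n + \dim\mathbb{S}^{(n-2,2)}$ must equal $\binom{n+2}{2}$, forcing this last space to be exactly the $\mathbb{S}^{(n-2,2)}$-isotypic component. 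The main obstacle is the last point: showing that these five explicit subspaces are pairwise independent, span all of $\R[\mathbf{x}]_2$, and that the two pairs each really split off as two distinct irreducible summands rather than interacting; this is best handled by a clean dimension count together with the observation that an equivariant map between irreducibles is either zero or an isomorphism, so checking nonvanishing on a single vector suffices.
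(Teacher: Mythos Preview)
Your approach is sound and in fact more substantive than what the paper offers: the paper's entire proof is the sentence ``The proof is fully computational and we calculate a symmetry adapted basis based on higher Specht polynomials,'' with a citation to Morita and to Debus--Riener. So the paper treats this as a black-box combinatorial construction, whereas you argue representation-theoretically by writing $\R[\mathbf{x}]_2 \cong \operatorname{Sym}^2(M)$ with $M \cong \text{triv} \oplus \text{std}$ and decomposing the symmetric square via the standard plethysm $\operatorname{Sym}^2(\text{std}) \cong \text{triv} \oplus \text{std} \oplus \mathbb{S}^{(n-1,2)}$. Your route is self-contained and explains \emph{why} the decomposition looks the way it does; the paper's route has the practical advantage that the higher Specht machinery directly outputs the symmetry-adapted basis vectors needed for the subsequent sums-of-squares computations in Lemma~3.9.

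Two small points to clean up. First, you oscillate between labeling the trivial and standard $S_{n+1}$-irreducibles as $\mathbb{S}^{(n+1)}, \mathbb{S}^{(n,1)}$ (correct) and $\mathbb{S}^{(n)}, \mathbb{S}^{(n-1,1)}$ (the paper's indexing, which is off by one); pick one convention. Second, the dimension verification you leave with question marks is easy to finish: $\dim \mathbb{S}^{(n-1,2)} = \tfrac{(n+1)(n-2)}{2}$, and indeed $2\cdot 1 + 2\cdot n + \tfrac{(n+1)(n-2)}{2} = \binom{n+2}{2}$. For the identification of $\langle (\mathbf{x}_i-\mathbf{x}_j)(\mathbf{x}_k-\mathbf{x}_l)\rangle$ with this irreducible, rather than computing its dimension directly it is cleaner to note that it lies inside $\operatorname{Sym}^2(\text{std})$, that applying the transposition $(12)$ shows its Reynolds image is zero (no trivial part), and a similar argument kills the standard part; being nonzero and $S_{n+1}$-stable it must then coincide with the unique $\mathbb{S}^{(n-1,2)}$ summand.
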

The proof is fully computational and we calculate a symmetry adapted basis based on higher Specht polynomials \cite{Morita} (and refer to \cite{Debus-Riener} for details). 

We apply the Reynolds-Operator of the symmetric group $S_{n+1}$ to pairwise products of equivariants of the isotypic decomposition which do not use $p_1$, since we consider sum of squares in $\R[\mathbf{x}]$ modulo the ideal $(p_1)$. 
\begin{lemma} \label{lem:sym}
For $n \geq 4$, we have
    \begin{align*}
        \mathcal{R}_{S_{n+1}}(p_2^{2}) \quad = \quad & p_{2}^2 \ , \\
        \mathcal{R}_{S_{n+1}}((\mathbf{x}_1^2-\mathbf{x}_2^2)^2)\quad = \quad &\frac{2}{n}p_4-\frac{2}{(n+1)n}p_2^2 , \ \ \text{and}  \\
          \mathcal{R}_{S_{n+1}}((\mathbf{x}_1-\mathbf{x}_2)^2(\mathbf{x}_3-\mathbf{x}_4)^2) \quad = \\
   \quad &       \hspace{-3.1cm}  
           \frac{4 (p_1^4 + 3 p_2^2 - 4 p_3p_1 + (n+1)^2 (p_2^2 - p_4) +     (n+1) (-2 p_2p_1^2 - 3 p_2^2+ 4 p_3p_1 + p_4))}{(n+1)n(n-1)(n-2)}. 
    \end{align*}
\end{lemma}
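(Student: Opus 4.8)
The first identity is immediate: $p_2^2$ is already $S_{n+1}$-invariant, so $\mathcal{R}_{S_{n+1}}(p_2^2)=p_2^2$. For the remaining two identities the plan is to expand each quartic into monomials, group these monomials by their $S_{n+1}$-orbit (equivalently, by the partition $\la$ recording the multiset of exponents), and use that the Reynolds operator sends a monomial of type $\la$ to $\frac{1}{N_\la}m_\la$, where $m_\la$ is the monomial symmetric polynomial of type $\la$ and $N_\la$ is the size of the orbit; here $N_{(4)}=n+1$, $N_{(2,2)}=\binom{n+1}{2}$, $N_{(2,1,1)}=(n+1)\binom{n}{2}$ and $N_{(1,1,1,1)}=\binom{n+1}{4}$. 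It then remains to rewrite the relevant monomial symmetric polynomials in the power sum basis using $m_{(4)}=p_4$ together with
\[
m_{(2,2)}=\tfrac12\bigl(p_2^2-p_4\bigr),\qquad
m_{(2,1,1)}=\tfrac12 p_1^2p_2-\tfrac12 p_2^2-p_1p_3+p_4,\qquad
m_{(1,1,1,1)}=e_4=\tfrac{1}{24}\bigl(p_1^4-6p_1^2p_2+3p_2^2+8p_1p_3-6p_4\bigr),
\]
which in turn follow from the identities $p_1p_3=p_4+m_{(3,1)}$, $p_1^2p_2=p_2^2+2m_{(3,1)}+2m_{(2,1,1)}$ and the classical Newton identity for $e_4$.

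For the second identity, expanding $(\mathbf{x}_1^2-\mathbf{x}_2^2)^2=\mathbf{x}_1^4+\mathbf{x}_2^4-2\mathbf{x}_1^2\mathbf{x}_2^2$ and applying the Reynolds operator term by term gives
\[
\mathcal{R}_{S_{n+1}}\bigl((\mathbf{x}_1^2-\mathbf{x}_2^2)^2\bigr)=\frac{2}{n+1}p_4-\frac{2}{\binom{n+1}{2}}m_{(2,2)}=\frac{2}{n+1}p_4-\frac{2}{(n+1)n}\bigl(p_2^2-p_4\bigr),
\]
and combining the two $p_4$-contributions (using $\frac{2}{n+1}+\frac{2}{(n+1)n}=\frac{2}{n}$) yields $\frac{2}{n}p_4-\frac{2}{(n+1)n}p_2^2$.

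For the third identity, expanding $(\mathbf{x}_1-\mathbf{x}_2)^2(\mathbf{x}_3-\mathbf{x}_4)^2$ produces nine monomials: four of type $(2,2)$ each with coefficient $1$, four of type $(2,1,1)$ each with coefficient $-2$, and the single monomial $4\,\mathbf{x}_1\mathbf{x}_2\mathbf{x}_3\mathbf{x}_4$ of type $(1,1,1,1)$. Since the Reynolds operator is constant on $S_{n+1}$-orbits, all monomials of a fixed type are sent to one common invariant, so
\[
\mathcal{R}_{S_{n+1}}\bigl((\mathbf{x}_1-\mathbf{x}_2)^2(\mathbf{x}_3-\mathbf{x}_4)^2\bigr)=\frac{4}{(n+1)n}\bigl(p_2^2-p_4\bigr)-\frac{16\,m_{(2,1,1)}}{(n+1)n(n-1)}+\frac{96\,e_4}{(n+1)n(n-1)(n-2)}.
\]
Substituting the expressions for $m_{(2,1,1)}$ and $e_4$ above, putting everything over the common denominator $(n+1)n(n-1)(n-2)$, and collecting the coefficients of $p_1^4$, $p_1^2p_2$, $p_1p_3$, $p_2^2$ and $p_4$ then matches the asserted numerator term by term; I expect this final bookkeeping to be the only real work, and it is routine polynomial arithmetic with no conceptual obstacle. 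Alternatively, since $\mathcal{R}_{S_{n+1}}$ takes values in the five-dimensional space $\langle p_1^4,p_1^2p_2,p_1p_3,p_2^2,p_4\rangle_\R$, one could pin down the five coefficients by evaluating both sides at five well-chosen points of $\R^{n+1}$ (for instance $(1,-1,1,-1,0,\ldots,0)$, $(1,\ldots,1)$, $(2,0,\ldots,0)$, and so on), but the orbit-counting computation above seems cleaner.
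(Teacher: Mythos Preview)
Your proof is correct and follows essentially the same route as the paper's own argument: apply the Reynolds operator to obtain an expression in the monomial symmetric basis, then convert to power sums. The paper simply delegates the basis change to the Sage \texttt{SymmetricFunctions} package, whereas you carry out the orbit-counting and the $m_\lambda\to p_\mu$ conversion by hand; the final ``bookkeeping'' you defer does indeed collapse to the stated numerator.
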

\begin{proof}
We calculate the polynomials $p_2^{2}, (\mathbf{x}_1^2-\mathbf{x}_2^2)^2, (\mathbf{x}_1-\mathbf{x}_2)^2(\mathbf{x}_3-\mathbf{x}_4)^2$ and apply the Reynolds-Operator $\mathcal{R}_{S_{n+1}}$. We then obtain the right hand side of the equation in the basis of monomial symmetric polynomials. We then use the package Symmetric Function in Sage to obtain a representation in terms of the power sum polynomials.    
\end{proof}

\begin{lemma}
If $f \in \R[\mathbf{x}]$ is a $A_n$-invariant sum of squares quartic modulo the ideal $(p_1)$ then 
    $$ f =  a (p_4-\frac{1}{n+1}p_2^2)+b((1-n+n^2)p_2^2-n(1+n)p_4) + p_1\cdot g$$ for some $a,b \geq 0$ and $g \in \R[x]$.
\end{lemma}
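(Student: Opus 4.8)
The plan is to run the symmetry reduction of Proposition \ref{prop:describing invariant sos} with $d=2$ and then substitute the explicit Reynolds images of Lemma \ref{lem:sym}, keeping everything modulo the ideal $(p_1)$; throughout I assume $n\ge 4$, which is the range where Lemma \ref{lem:sym} is available (for $n\le 3$ the assertion follows from Hilbert's classification, cf.\ the remark after Theorem \ref{thm:main}). The first move is to replace $f$ by a genuinely invariant sum of squares. Take the $A_n$-invariant representative $f=a_0p_2^2+b_0p_4$ of the class and write $f=g_1^2+\dots+g_m^2+p_1g$; since a homogeneous sum of squares in $\R[\mathbf{x}]/(p_1)\cong\R[\mathbf{y}]$ is a sum of squares of homogeneous elements, we may assume the $g_i$ are forms of degree $2$ and $g$ a form of degree $3$. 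Applying the Reynolds operator $\mathcal{R}_{S_{n+1}}$ and using that $f$ and $p_1$ are invariant gives $f=\mathcal{R}_{S_{n+1}}\!\bigl(\sum_ig_i^2\bigr)+p_1\,\mathcal{R}_{S_{n+1}}(g)$. As the Reynolds operator of a sum of squares is again a sum of squares, $h:=\mathcal{R}_{S_{n+1}}\!\bigl(\sum_ig_i^2\bigr)$ is an $S_{n+1}$-invariant sum of squares quartic with $f\equiv h\pmod{(p_1)}$.

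Next I would apply Proposition \ref{prop:describing invariant sos} to $h$, using the symmetry adapted basis of $\R[\mathbf{x}]_2$ underlying Lemma \ref{lem:sym}: for the trivial isotypic component the generators $\{p_2,\,p_1^2\}$, for the standard component $\{\mathbf{x}_1^2-\mathbf{x}_2^2,\,p_1(\mathbf{x}_1-\mathbf{x}_2)\}$, and the single generator $(\mathbf{x}_1-\mathbf{x}_2)(\mathbf{x}_3-\mathbf{x}_4)$ for the remaining component. This yields symmetric positive semidefinite matrices $A_1,A_2$ of size $2$ and a scalar $A_3\ge 0$ with $h=\operatorname{Tr}(A_1B_1)+\operatorname{Tr}(A_2B_2)+A_3B_3$. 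The key observation is that, apart from $(B_1)_{11}=\mathcal{R}_{S_{n+1}}(p_2^2)$, $(B_2)_{11}=\mathcal{R}_{S_{n+1}}((\mathbf{x}_1^2-\mathbf{x}_2^2)^2)$ and $B_3=\mathcal{R}_{S_{n+1}}((\mathbf{x}_1-\mathbf{x}_2)^2(\mathbf{x}_3-\mathbf{x}_4)^2)$, every entry of the $B_j$ lies in $(p_1)$: each such entry is a Reynolds image of a product in which at least one factor carries the invariant polynomial $p_1$, which therefore factors out and survives the symmetrization. Hence, modulo $(p_1)$,
\[
f\;\equiv\; c_1\,\mathcal{R}_{S_{n+1}}(p_2^2)\;+\;c_2\,\mathcal{R}_{S_{n+1}}\!\bigl((\mathbf{x}_1^2-\mathbf{x}_2^2)^2\bigr)\;+\;c_3\,\mathcal{R}_{S_{n+1}}\!\bigl((\mathbf{x}_1-\mathbf{x}_2)^2(\mathbf{x}_3-\mathbf{x}_4)^2\bigr)\pmod{(p_1)},
\]
where $c_1:=(A_1)_{11}\ge 0$, $c_2:=(A_2)_{11}\ge 0$ and $c_3:=A_3\ge 0$, being diagonal entries of positive semidefinite matrices.

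Finally I would substitute Lemma \ref{lem:sym} and rewrite the right-hand side. Set $E_1:=p_4-\tfrac{1}{n+1}p_2^2$ and $E_2:=(1-n+n^2)p_2^2-n(1+n)p_4$. Lemma \ref{lem:sym} gives $\mathcal{R}_{S_{n+1}}(p_2^2)=p_2^2$, $\mathcal{R}_{S_{n+1}}((\mathbf{x}_1^2-\mathbf{x}_2^2)^2)=\tfrac{2}{n}E_1$, and, after deleting the $p_1$-multiples, $\mathcal{R}_{S_{n+1}}((\mathbf{x}_1-\mathbf{x}_2)^2(\mathbf{x}_3-\mathbf{x}_4)^2)\equiv\tfrac{4}{(n+1)n(n-1)(n-2)}E_2\pmod{(p_1)}$. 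An elementary linear solve expresses the remaining term as $p_2^2=\tfrac{n(n+1)}{(n-1)^2}E_1+\tfrac{1}{(n-1)^2}E_2$, with both coefficients positive since $n\ge 4$. Substituting, $f\equiv aE_1+bE_2\pmod{(p_1)}$ with $a=\tfrac{n(n+1)}{(n-1)^2}c_1+\tfrac{2}{n}c_2\ge 0$ and $b=\tfrac{1}{(n-1)^2}c_1+\tfrac{4}{(n+1)n(n-1)(n-2)}c_3\ge 0$, i.e.\ $f=aE_1+bE_2+p_1g$ for some $g\in\R[\mathbf{x}]$, which is the claim.

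I expect the only genuinely delicate point to be the bookkeeping of the middle paragraph: one must fix the symmetry adapted basis so that exactly the $p_1$-free generators of each isotypic block survive modulo $(p_1)$, and check that every off-diagonal entry of $B_1$ and $B_2$ involving a $p_1$-carrying generator is itself divisible by $p_1$ and hence disappears. This is automatic for the basis used in Lemma \ref{lem:sym}, but a careless choice (e.g.\ replacing $p_1^2$ by the elementary symmetric polynomial $\sum_{i<j}\mathbf{x}_i\mathbf{x}_j$) would reintroduce terms that are not multiples of $p_1$ and could spoil the nonnegativity of the coefficients. The remaining ingredients — the Reynolds identities of Lemma \ref{lem:sym} and the $2\times 2$ identity for $p_2^2$ — are routine.
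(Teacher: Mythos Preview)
Your argument is correct and is essentially the paper's proof written out in full: the paper compresses your middle paragraph into the phrase ``by Lemma \ref{lem:sym} and the discussion above,'' while you make explicit which entries of the $B_j$ vanish modulo $(p_1)$ and why only the three diagonal PSD entries $c_1,c_2,c_3\ge 0$ survive; the absorption of $p_2^2$ via $(n-1)^2p_2^2=n(n+1)E_1+E_2$ is exactly the identity the paper uses. One small caveat: your parenthetical that ``for $n\le 3$ the assertion follows from Hilbert's classification'' is fine for $n=3$ (where $F_3=4E_1$ and $G_3\propto E_2$ are the extremal rays of $P^{A_3}=\Sigma^{A_3}$), but it is not literally true for $n=2$, since there $F_2=2p_4-p_2^2$ is sos yet not a nonnegative combination of $E_1,E_2$; this does not matter for the paper, which only invokes the lemma for $n\ge 4$.
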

\begin{proof}
 Since $f \in \R[\mathbf{x}]$ is $A_n$-invariant, we can apply the Reynolds-Operator $\mathcal{R}_{A_n} = \mathcal{R}_{S_{n+1}}$ to $ g_1^2+\ldots+g_m^2$ and consider  $\mathcal{R}_{S_{n+1}}(g_1^2+\ldots+g_m^2) \mod p_1$ which has to be of the form 
 $$ \lambda_1 p_2^2 + \lambda_2 (p_4-\frac{1}{n+1}p_2^2) + \lambda_3((1-n+n^2)p_2^2-n(1+n)p_4)  $$ 
 for some $\lambda_1,\lambda_2,\lambda_3 \geq 0$, by Lemma \ref{lem:sym} and the discussion above. We have  $$(1-n)^2p_2^2 \quad = \quad  n(n+1) (p_4-\frac{1}{n+1}p_2^2)+ ((1-n+n^2)p_2^2-n(1+n)p_4) $$ which proves the claim.
\end{proof}

\subsection{Proof of Theorem \ref{thm:main}} \label{subsection Proof of the theorem}
We are ready to prove Theorem \ref{thm:main}
\begin{proof}[Proof of Theorem \ref{thm:main}]
There are three statements that we want to show. First, the polynomial $G_n \in \R[\mathbf{x}]$ is a sum of squares modulo $(p_1)$ for all $n \geq 3$. Second, for $n \geq 4$ odd the polynomial $F_n \in \R[\mathbf{x}]$ is a sum of squares modulo $(p_1)$. Third, for $n \geq 3$ even the polynomial $F_n \in \R[\mathbf{x}]$ is not a sum of squares modulo $(p_1)$

\begin{enumerate}
    \item We have 
    \begin{align*}
        G_n \quad  & = \quad p_2^2-\frac{n+n^2}{1-n+n^2}p_4 \\
        & = \quad \frac{1}{1-n+n^2}((1-n+n^2)p_2^2-n(1+n)p_4)
    \end{align*}
   which shows that $G_n$ is a sum of squares modulo $(p_1)$.
   \item If $n \geq 4$ is odd we have $F_n$ is a sum of squares by Proposition \ref{prop:symm4}. This is, because $\alpha$ equals the global minimum of $p_4$ on $p_2=1$ and we have seen that the corresponding polynomial is a sum of squares.
    \item For even $n \geq 4$ we have $F_n = -p_2^2+\frac{2n+3n^2+n^3}{4+2n+n^2}p_4$. We suppose that $F_n$ is a sum of squares modulo $(p_1)$. We must have 
\begin{align*}
    -p_2^2+\frac{2n+3n^2+n^3}{4+2n+n^2}p_4 \quad = \quad a (p_4-\frac{1}{n+1}p_2^2)+b((1-n+n^2)p_2^2+(-n-n^2)p_4) 
\end{align*}
for some $a,b \geq 0.$
Comparing the coefficients implies
 \begin{align*}
 b \quad = \quad & -\frac{4}{4 - 6 n + n^2 + n^4} 
 \end{align*}
which is a contradiction.
\end{enumerate}
\end{proof}
%
%
%
%

\end{document}